\numberwithin{equation}{section}
\newtheorem{theorem}{Theorem}[section]
\newtheorem{definition}[theorem]{Definition}
\newtheorem{proposition}[theorem]{Proposition}
\newtheorem{lemma}[theorem]{Lemma}
\newtheorem{corollary}[theorem]{Corollary}
\newtheorem{remark}[theorem]{Remark}
\theoremstyle{definition}
\DeclareMathOperator{\der}{Der}
\DeclareMathOperator{\inn}{inn}
\DeclareMathOperator{\ch}{char}
\begin{document}


\title{Inner and Outer Derivations of $\mathbb{F}V_{8n}$}

\author{Praveen Manju and Rajendra Kumar Sharma}
\date{}
\maketitle

\begin{center}
\noindent{\small Department of Mathematics, \\Indian Institute of Technology Delhi, \\ Hauz Khas, New Delhi-110016, India$^{1}$}
\end{center}

\footnotetext[1]{{\em E-mail addresses:} \url{praveenmanjuiitd@gmail.com}(Corresponding Author: Praveen Manju), \url{rksharmaiitd@gmail.com}(Rajendra Kumar Sharma).}

\medskip

\begin{abstract}
Let $\mathbb{F}$ be a field of characteristic $0$ or an odd rational prime $p$. In this article, we give an explicit classification of all the inner and outer derivations of the group algebra $\mathbb{F}V_{8n}$, where $V_{8n}$ is a group of order $8n$ ($n$ a positive integer) with presentation $\langle a, b \mid a^{2n} = b^{4} = 1, ba = a^{-1}b^{-1}, b^{-1}a = a^{-1}b \rangle$. First, we explicitly classify all the $\mathbb{F}$-derivations of $\mathbb{F}V_{8n}$ by giving the dimension and a basis of the derivation algebra consisting of all $\mathbb{F}$-derivations of $\mathbb{F}V_{8n}$. Consequently, we classify all inner and outer derivations of $\mathbb{F}V_{8n}$ when $\mathbb{F}$ is an algebraic extension of a prime field. Thus, we establish that all the derivations of $\mathbb{F}V_{8n}$ are inner when the characteristic of $\mathbb{F}$ is $0$ or $p$ with $p$ relatively prime to $n$, and that non-zero outer derivations exist only in the case when the characteristic of $\mathbb{F}$ is $p$ with $p$ dividing $n$.
\end{abstract}

\textbf{Keywords:} Derivation, Inner derivation, Outer derivation, $\mathbb{F}$-derivation, Group ring, Group algebra

\textbf{Mathematics Subject Classification (2020):} 16S34, 16W25, 20C05

\section{Introduction}
Let $R$ be a ring. A derivation $d$ of $R$ is a map $d:R \rightarrow R$ satisfying $d(a+b) = d(a) + d(b)$ and $d(ab) = d(a)b + a d(b)$ for all $a, b \in R$. The map $d_{b}:R \rightarrow R$ ($b \in R$) defined by $d_{b}(a) = ab-ba$ for all $a \in R$, is a derivation of $R$. A derivation $d$ of $R$ is called inner if $d = d_{b}$ for some $b \in R$, and outer if it is not inner. If $S$ is a subring of $R$, then a derivation $d$ of $R$ is called an $S$-derivation if $d(s) = 0$ for all $s \in S$. We denote the set of all derivations of $R$ by $\der(R)$, the set of all inner derivations of $R$ by $\der_{\inn}(R)$, and the set of all $S$-derivations of $R$ by $\der_{S}(R)$. If $R$ has unity $1$, then $d(1) = 0$. If $R$ is commutative, then $\der(R)$ becomes an $R$-module concerning the componentwise sum and module action, and $\der_{\inn}(R)$ and $\der_{S}(R)$ become its submodules. If $G$ is a group, then the group ring $RG$ of $G$ over $R$ is defined as the set of all formal linear combinations of the form $\alpha = \sum_{g \in G} a_{g} g$ with finite support, where the support of $\alpha$ is defined to be the set $\text{supp}(\alpha) = \{g \in G \mid a_{g} \neq 0\}$. $RG$ is a ring concerning the componentwise addition and multiplication defined by $\alpha \beta = \sum_{g, h \in G} a_{g} b_{h} gh$ for $\alpha = \sum_{g \in G} a_{g} g$, $\beta = \sum_{g \in G} b_{g} g$ in $RG$. If $R$ is commutative with unity and $G$ is abelian, then $RG$ becomes a commutative unital algebra over $R$. If $R$ is commutative, then $\der(RG)$ is both an $RG$- as well as $R$-module concerning the componentwise sum and module action, and $\der_{\inn}(RG)$ and $\der_{S}(RG)$ are its submodules. Derivations play an essential role in many important branches of mathematics and physics. Understanding the structures of rings and algebras has involved extensive study of derivations. The notion of derivation in rings has been applied to various algebras, for example, BCI-algebras \cite{Muhiuddin2012}, von Neumann algebras \cite{Brear1992}, incline algebras \cite{Kim2014}, MV-algebras \cite{KamaliArdakani2013}, \cite{Mustafa2013}, Banach algebras \cite{Raza2016}, and lattices which play an important role in information theory and recovery, management of information access, and cryptanalysis \cite{Chaudhry2011}. Differentiable manifolds, operator algebras, $\mathbb{C}^{*}$-algebras, and representation theory of Lie groups are being studied using derivations \cite{Klimek2021}. Coding theory also makes use of derivations. For example, in \cite{Boucher2014}, codes have been built as modules over skew polynomial rings, where a derivation and an automorphism define multiplication. In \cite{Creedon2019}, authors have studied codes as images of derivations on group algebras. For more applications of derivations, we refer the reader to \cite{Atteya2019}.  

More research is expected on the derivations of group rings. In the literature, one can find only a few articles that concern derivations of group rings. The serious study of derivations of group rings begins with Smith's 1978 paper \cite{Smith1978}, in which the author studies derivations of a finitely generated torsion-free nilpotent group over a field. For instance, it is shown that such group rings always contain an outer derivation. The second major work in this direction can be seen in \cite{Spiegel1994} and \cite{MiguelFerrero1995}. In \cite{Spiegel1994}, it is shown that every derivation of an integral group ring is inner, and in \cite{MiguelFerrero1995}, it is shown that every $R$-derivation of the group ring $RG$ is inner if $R$ is a semiprime ring with $\ch(R)$, either $0$ or does not divide the order of any element of $G$, where $G$ is a torsion group with its center $Z(G)$ having finite index in it. Further evidence of the research in this direction is found only in 2019 in the works of Dishari Chaudhuri and Leo Creedon et al. Dishari Chaudhari, in her papers, \cite{Chaudhuri2019} and \cite{Chaudhuri2021}, generalizes the above results of \cite{Spiegel1994} and \cite{MiguelFerrero1995} respectively to $(\sigma, \tau)$-derivations of group rings of finite groups over a field and an integral domain where $\sigma, \tau$ satisfy certain conditions. In recent years, more tools have been used to study derivations of group rings. For instance, A. A. Arutyunov uses topological and character theory techniques to study derivations of group algebras (\cite{AleksandrAlekseev2020}, \cite{OrestD.Artemovych2020}, \cite{Arutyunov2021}, \cite{A.A.Arutyunov2020}, \cite{Arutyunov2023}, \cite{Arutyunov2020a}, \cite{Arutyunov2020}). In \cite{Creedon2019}, the authors study derivations of group rings in terms of the generators and relators of a group and, further, as applications, classify derivations of commutative group algebras over fields of positive characteristic and that of dihedral group algebras over a characteristic $2$ field. In \cite{Manju2023}, the authors explicitly classify all inner derivations of a group algebra $\mathbb{F}G$ of a finite group $G$ over an arbitrary field $\mathbb{F}$. This result generalizes Theorem $3.13$ of \cite{Creedon2019} where the authors have classified all inner derivations of dihedral group algebras over fields of characteristic $2$. In \cite{Manju2023}, the authors also classify all inner and outer derivations of dihedral, dicyclic, and semi-dihedral group algebras over fields of characteristic $0$ or an odd rational prime $p$. The work in this article concerns the derivations of the non-commutative algebra $\mathbb{F}V_{8n}$ of one of the essential non-abelian groups $V_{8n}$ over a field $\mathbb{F}$. The group ring $\mathbb{F}V_{8n}$ is important concerning its applications in coding theory (for reference, see \cite{Sehrawat2019}). Recently, the applications of derivations in coding theory have been discovered. For instance, in \cite{Creedon2019}, it has been found that some well-known codes occur as images of derivations. Thus, the derivations of the group ring $\mathbb{F}V_{8n}$ are interesting concerning their applications in coding theory; the explicit description of the derivation algebra of $\mathbb{F}V_{8n}$ can prove to be valuable in constructing some good group ring codes using different techniques.

We divide the article into three sections. Section \ref{section 2} includes some preliminary results and definitions that we need to prove our main results in Section \ref{section 3}. The results already proved in the literature are just stated (with reference) without proof, and the new results are stated with proof. In Section \ref{section 3}, we study the derivations of the group algebra $\mathbb{F}V_{8n}$, where $V_{8n} = \langle a, b \mid a^{2n} = b^{4} = 1, ba = a^{-1}b^{-1}, b^{-1}a = a^{-1}b \rangle$ is a group of order $8n$ ($n$ a positive integer) and $\mathbb{F}$, unless otherwise stated, is a field of characteristic $0$ or an odd rational prime $p$. We classify all the $\mathbb{F}$-derivations of $\mathbb{F}V_{8n}$ by giving the dimension and a basis of the derivation algebra $\der_{\mathbb{F}}(\mathbb{F}V_{8n})$ consisting of all $\mathbb{F}$-derivations of $\mathbb{F}V_{8n}$. Consequently, we classify all the derivations of $\mathbb{F}V_{8n}$ when $\mathbb{F}$ is an algebraic extension of a prime field. It is thus established that when $\mathbb{F}$ is an algebraic extension of a prime field, then all derivations of $\mathbb{F}V_{8n}$ are inner provided $\ch(\mathbb{F})$ is either $0$ or an odd prime $p$ with $\gcd(n,p)=1$ and non-zero outer derivations of $\mathbb{F}V_{8n}$ exist only when $\ch(\mathbb{F}) = p$ with $\gcd(n,p) \neq 1$. Here $\ch(\mathbb{F})$ denotes the characteristic of field $\mathbb{F}$ and $\gcd(n,p)$ denotes the greatest common divisor of $n$ and $p$.

\section{Preliminary Results}\label{section 2}
Let $R$ be a commutative unital ring and $G = \langle X \mid Y \rangle$ be a group with $X$ as its set of generators and $Y$ as the set of relators. In \cite{Creedon2019}, the authors have studied the derivations of the group ring $RG$ in terms of the generators and relators of the group $G$. They have given a necessary and sufficient condition under which a map $f:X \rightarrow RG$ can be extended to a derivation of $RG$:
\begin{theorem}[{\cite[Theorem 2.5]{Creedon2019}}]\th\label{theorem 2.1}
Let $G = \langle X \mid Y \rangle$ be a group with $X$ as its set of generators and $Y$ the set of relators. Let $F_{X}$ denote the free group on $X$ and $\phi: F_{X} \rightarrow G$ the natural onto homomorphism. Let $R$ be a commutative ring with unity and $f:X \rightarrow RG$ be a map. Then 
\begin{enumerate}
\item[(i)] $f$ can be uniquely extended to a map $\tilde{f}:F_{X} \rightarrow RG$ satisfying \begin{equation}\label{eq 2.1}
\tilde{f}(vw) = \tilde{f}(v) \phi(w) + \phi(v) \tilde{f}(w), \hspace{0.2cm} \forall \hspace{0.2cm} v, w \in F_{X}.
\end{equation}

\item[(ii)] $f$ can be uniquely extended to an $R$-derivation of $RG$ if and only if $\tilde{f}(y) = 0$ for all $y \in Y$.
\end{enumerate}
\end{theorem}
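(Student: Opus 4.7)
The plan is to construct $\tilde{f}$ explicitly by iterated application of the Leibniz-type rule \eqref{eq 2.1} on reduced words of $F_X$, verify well-definedness and uniqueness, and then descend to an $R$-derivation of $RG$ via the natural projection $\phi$, exploiting the hypothesis $\tilde{f}(y) = 0$ on relators.

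For part (i), I would first deduce consequences that any extension satisfying \eqref{eq 2.1} must enjoy: taking $v=w=1$ forces $\tilde{f}(1)=0$, and then taking $w=v^{-1}$ forces $\tilde{f}(x^{-1}) = -\phi(x^{-1})\, f(x)\, \phi(x^{-1})$ for each $x \in X$. These forced values propagate, via a telescoping use of \eqref{eq 2.1}, to a unique candidate on an arbitrary reduced word $w = x_{i_1}^{\epsilon_1} \cdots x_{i_n}^{\epsilon_n} \in F_X$ given by the Fox-style sum
$$\tilde{f}(w) \;=\; \sum_{k=1}^{n} \phi\bigl(x_{i_1}^{\epsilon_1} \cdots x_{i_{k-1}}^{\epsilon_{k-1}}\bigr)\, \tilde{f}\bigl(x_{i_k}^{\epsilon_k}\bigr)\, \phi\bigl(x_{i_{k+1}}^{\epsilon_{k+1}} \cdots x_{i_n}^{\epsilon_n}\bigr).$$
For this to define a map out of $F_X$, I would verify invariance under insertion or deletion of a trivial pair $xx^{-1}$ or $x^{-1}x$; the two new summands cancel because $f(x)\phi(x^{-1}) + \phi(x)\tilde{f}(x^{-1}) = 0$ by the choice of $\tilde{f}(x^{-1})$. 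The Leibniz identity \eqref{eq 2.1} then follows by splitting the sum for $\tilde{f}(vw)$ at the boundary between $v$ and $w$, and uniqueness is immediate since the forced base values together with \eqref{eq 2.1} determine $\tilde{f}$ inductively on every word.

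For part (ii), the forward direction is quick: if $\tilde{f}$ extends to an $R$-derivation $D$ of $RG$ (meaning $D(\phi(x)) = f(x)$ for $x \in X$), then $D \circ \phi : F_X \to RG$ also satisfies \eqref{eq 2.1} because $\phi$ is a homomorphism and $D$ is a derivation; by the uniqueness in (i), $D \circ \phi = \tilde{f}$, and hence $\tilde{f}(y) = D(\phi(y)) = D(1) = 0$ for every relator $y \in Y$. For the converse, assuming $\tilde{f}(y) = 0$ for all $y \in Y$, I would choose for each $g \in G$ a preimage $w_g \in \phi^{-1}(g)$, set $D(g) = \tilde{f}(w_g)$, and extend $R$-linearly to $RG$.

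The main obstacle is showing this $D$ is well-defined, i.e., that $\tilde{f}$ is constant on each fibre of $\phi$, equivalently that $\tilde{f}$ vanishes on $\ker\phi$, which is the normal closure $\langle\langle Y \rangle\rangle$ of $Y$ in $F_X$. Using \eqref{eq 2.1} twice together with $\phi(y) = 1$ and the identity $\tilde{f}(y^{-1}) = -\phi(y^{-1})\tilde{f}(y)\phi(y^{-1})$, I would compute $\tilde{f}(v y^{\pm 1} v^{-1}) = \phi(v)\tilde{f}(y^{\pm 1})\phi(v^{-1})$ for every $v \in F_X$ and $y \in Y$, which vanishes under the hypothesis. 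A short induction via \eqref{eq 2.1} then shows $\tilde{f}$ vanishes on every product of such conjugates, hence on all of $\ker\phi$. Once well-definedness is secured, the derivation property $D(gh) = D(g)h + g D(h)$ for $g,h \in G$ reads off directly from \eqref{eq 2.1} applied to $w_g w_h$, $R$-linearity extends it to all of $RG$, and uniqueness of $D$ is forced because $D$ is $R$-linear with values on the basis $G$ dictated by $f$ and the derivation property.
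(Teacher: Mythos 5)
Your proposal is correct, and it follows essentially the same route as the source: your Fox-style candidate formula and the forced value $\tilde{f}(x^{-1}) = -\phi(x^{-1})f(x)\phi(x^{-1})$ are precisely the extension recorded in \eqref{eq 2.2} and \eqref{eq 2.3}, with the standard verifications (invariance under insertion of trivial pairs, vanishing on the normal closure of $Y$, descent along $\phi$) filled in correctly. Note that the paper itself gives no proof here, as the theorem is quoted from \cite{Creedon2019}.
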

\noindent The map $\tilde{f}:F_{X} \rightarrow RG$ is defined as \begin{equation}\label{eq 2.2} \tilde{f}(x) = \begin{cases} 
f(x) & \hspace{0.2cm} \text{if $x \in X$} \\
-xf(x^{-1})x & \hspace{0.2cm} \text{if $x \in X^{-1}$} \\
0 & \hspace{0.2cm} \text{if $x = 1$}
\end{cases}\end{equation} and then on the whole of $F_{X}$ as \begin{equation}\label{eq 2.3} \tilde{f}(v) = \sum_{i=1}^{k} \left(\prod_{j=1}^{i-1} x_{j} \right) \tilde{f}(x_{i}) \left(\prod_{j=i+1}^{k} x_{j} \right),\end{equation} if $v = \prod_{i=1}^{k} x_{i}$. 
Any derivation $d$ of a group algebra $RG$ is uniquely determined by the image set $\{d(x) \mid x \in X\}$. If the generating set $X$ is finite, say, $X = \{x_{1}, ..., x_{t}\}$, then we denote such a derivation by $d_{(\bar{x_{1}}, ..., \bar{x_{t}})}$, where $\bar{x_{i}} = d(x_{i})$ ($1 \leq i \leq t$). The following theorem guarantees that for a $K$-algebra $\mathcal{A}$ where $K$ is an algebraic extension of a prime field $\mathbb{F}$, $\der(\mathcal{A}) = \der_{K}(\mathcal{A})$:
\begin{theorem}
[{\cite[Theorem 2.2]{Creedon2019}}]\th\label{theorem 2.2}
Let $\mathcal{A}$ be a $K$-algebra where $K$ is an algebraic extension of a prime field $\mathbb{F}$ and let $d \in \der(\mathcal{A})$. Then $d(K) = \{0\}$ and $d$ is a $K$-linear map.
\end{theorem}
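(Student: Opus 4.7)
The plan is to reduce everything to showing that $d$ vanishes on $K$, because the $K$-linearity of $d$ then follows immediately from the Leibniz rule: for any $k \in K \subseteq \mathcal{A}$ and $a \in \mathcal{A}$, one has $d(ka) = d(k)a + k\,d(a) = k\,d(a)$ as soon as $d(k) = 0$. So the real content of the theorem is the claim $d(K) = \{0\}$, and I would establish this in two stages, first on the prime subfield $\mathbb{F} \subseteq K$ and then on the algebraic extension $K/\mathbb{F}$.

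For the first stage, from $d(1) = d(1 \cdot 1) = 2\,d(1)$ we get $d(1) = 0$, and by additivity $d(n \cdot 1) = 0$ for every $n \in \mathbb{Z}$. If $\ch(\mathbb{F}) = p > 0$, this already gives $d(\mathbb{F}) = \{0\}$ since $\mathbb{F} = \mathbb{F}_{p}$ is precisely the image of $\mathbb{Z}$. If $\ch(\mathbb{F}) = 0$, then $\mathbb{F} = \mathbb{Q}$, and for $m, n \in \mathbb{Z}$ with $n \neq 0$, applying the Leibniz rule to $n \cdot (m/n) = m$, together with $d(m) = 0$ and the invertibility of $n$ in $\mathbb{Q}$, forces $d(m/n) = 0$.

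The second stage is the heart of the argument and the place where I expect the only subtlety to arise. Given $\alpha \in K$, let $p(x) = x^{t} + c_{t-1} x^{t-1} + \cdots + c_{0} \in \mathbb{F}[x]$ be its minimal polynomial. Applying $d$ to $p(\alpha) = 0$, using the already-established vanishing of $d$ on $\mathbb{F}$ and the Leibniz identity $d(\alpha^{k}) = k \alpha^{k-1} d(\alpha)$, the equation collapses to $p'(\alpha)\,d(\alpha) = 0$ in $\mathcal{A}$. The main obstacle is to justify that $p'(\alpha) \neq 0$; this is exactly where the hypothesis that $\mathbb{F}$ is a \emph{prime} field bites, because both $\mathbb{Q}$ and $\mathbb{F}_{p}$ are perfect ($\mathbb{F}_{p}$ because the Frobenius is surjective on finite fields), so every irreducible polynomial over $\mathbb{F}$ is separable, giving $\gcd(p, p') = 1$ and hence $p'(\alpha) \neq 0$ in $K$. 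As a non-zero element of the field $K$, which sits centrally inside the $K$-algebra $\mathcal{A}$, $p'(\alpha)$ is invertible in $\mathcal{A}$, and cancelling it yields $d(\alpha) = 0$, completing the proof.
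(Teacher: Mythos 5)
Your proof is correct and is essentially the standard argument for this cited result (the paper itself only quotes it from \cite{Creedon2019} without proof): kill $d$ on the prime subfield via $d(1)=0$, then apply $d$ to the minimal polynomial of an algebraic element and use separability (perfectness of prime fields) to cancel $p'(\alpha)$, with $K$-linearity following from the Leibniz rule. No gaps; the points you flag as subtle --- centrality of $K$ in $\mathcal{A}$ justifying $d(\alpha^{k})=k\alpha^{k-1}d(\alpha)$ and the invertibility of $p'(\alpha)$ --- are handled correctly.
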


In \cite{Manju2023}, the authors have given an explicit classification of all the inner derivations of a group algebra $\mathbb{F}G$ of a finite group $G$ over a field $\mathbb{F}$ of arbitrary characteristic: 
\begin{theorem}
[{\cite[Theorem 2.15]{Manju2023}}]\th\label{theorem 2.3}
Let $\mathbb{F}$ be a field of any characteristic and $G$ be a finite group of order $n$ having $r$ conjugacy classes. Then the dimension of $\der_{\inn}(\mathbb{F}G)$ over $\mathbb{F}$ is $n-r$. Moreover, if $C_{1}, ..., C_{r}$ are the all distinct conjugacy classes of $G$ with representatives $x_{1}, ..., x_{r}$ respectively and $s$ is a positive integer such that $|C_{i}| = 1$ for all $i \in \{1, ..., s\}$ and $|C_{i}| \geq 2$ for all $i \in \{s+1, ..., r\}$, then the set $$\mathcal{B}_{0} = \{d_{g} \mid g \in \bigcup_{i=s+1}^{r} (C_{i} \setminus \{x_{i}\})\}$$ forms a basis of $\der_{\inn}(\mathbb{F}G)$ over $\mathbb{F}$.
\end{theorem}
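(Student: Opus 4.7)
The natural strategy is to realize $\der_{\inn}(\mathbb{F}G)$ as the image of a linear map from $\mathbb{F}G$ and then compute its kernel. Define $\Phi: \mathbb{F}G \to \der_{\inn}(\mathbb{F}G)$ by $\Phi(\alpha) = d_{\alpha}$, where $d_{\alpha}(\beta) = \alpha\beta - \beta\alpha$. A quick check shows $\Phi$ is $\mathbb{F}$-linear (from bilinearity of the commutator), and it is surjective essentially by the definition of an inner derivation. Hence $\dim_{\mathbb{F}} \der_{\inn}(\mathbb{F}G) = \dim_{\mathbb{F}}(\mathbb{F}G) - \dim_{\mathbb{F}} \ker \Phi$.

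Next, I would identify $\ker \Phi$. By definition, $d_{\alpha} = 0$ iff $\alpha\beta = \beta\alpha$ for every $\beta \in \mathbb{F}G$, which is exactly the condition $\alpha \in Z(\mathbb{F}G)$. It is a classical fact that $Z(\mathbb{F}G)$ has dimension $r$ over $\mathbb{F}$, with basis given by the class sums $\widehat{C}_{i} = \sum_{g \in C_{i}} g$ for $i = 1, \ldots, r$. The rank--nullity theorem then yields $\dim_{\mathbb{F}} \der_{\inn}(\mathbb{F}G) = n - r$, which is the dimension assertion.

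For the basis claim, I would first verify the cardinality:
\[
|\mathcal{B}_{0}| \;=\; \sum_{i=s+1}^{r}(|C_{i}|-1) \;=\; \Bigl(\sum_{i=1}^{r}|C_{i}|\Bigr) - s - (r-s) \;=\; n - r,
\]
matching the dimension. So it suffices to show $\mathcal{B}_{0}$ is $\mathbb{F}$-linearly independent. Suppose $\sum_{k} \lambda_{k}\, d_{g_{k}} = 0$ for some finite collection of elements $g_{k}$ taken from $\bigcup_{i=s+1}^{r}(C_{i}\setminus\{x_{i}\})$. By linearity of $\Phi$, this is equivalent to $\sum_{k} \lambda_{k}\, g_{k} \in \ker\Phi = Z(\mathbb{F}G)$, so we may write $\sum_{k} \lambda_{k}\, g_{k} = \sum_{j=1}^{r} \mu_{j}\, \widehat{C}_{j}$ for some scalars $\mu_{j}$. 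The key observation is now a coefficient comparison at each representative $x_{j}$: on the left side $x_{j}$ does not occur (representatives of non-singleton classes were excluded, and singleton-class representatives are themselves excluded because those classes are indexed $i \leq s$, not $i > s$), while on the right side the coefficient of $x_{j}$ is exactly $\mu_{j}$. Therefore every $\mu_{j}$ vanishes, hence $\sum_{k}\lambda_{k}\, g_{k}=0$, and since the $g_{k}$ are distinct elements of the $\mathbb{F}$-basis $G$ of $\mathbb{F}G$, all $\lambda_{k}=0$.

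I do not anticipate a serious obstacle: the only subtle point is making the coefficient-comparison step airtight, which requires one to remember that singleton classes are absorbed into $Z(\mathbb{F}G)$ for free (their class sum is already a single group element and hence is manifestly central), which is exactly why the indices $i \leq s$ are omitted from $\mathcal{B}_{0}$ and why no element $x_{j}$ with $j \leq s$ appears on the left-hand side either. Once this is observed, the proof is essentially a two-step dimension count plus one coefficient comparison.
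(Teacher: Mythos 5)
The paper states this theorem as a citation from an earlier work of the same authors and gives no proof of its own, so there is nothing internal to compare against; judged on its own, your argument is correct and complete. Rank--nullity applied to $\alpha \mapsto d_{\alpha}$ with kernel $Z(\mathbb{F}G)$ spanned by the class sums, followed by the cardinality count $|\mathcal{B}_{0}| = n - r$ and the coefficient comparison at the representatives $x_{j}$ (valid because no $g_{k}$ can equal any $x_{j}$, the singleton classes being disjoint from the $C_{i}$ with $i > s$), is exactly the standard route, and every step is airtight in arbitrary characteristic. The only cosmetic discrepancy is that your convention $d_{\alpha}(\beta) = \alpha\beta - \beta\alpha$ is the negative of the paper's $d_{b}(a) = ab - ba$; since $d_{\alpha}$ and $-d_{\alpha}$ have the same kernel condition and span the same line, this affects nothing.
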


For a commutative unital ring $R$ and a group $G$, the homomorphism $\varepsilon:RG \rightarrow R$ defined by $\varepsilon(\sum_{g \in G}\lambda_{g}g) = \sum_{g \in G} \lambda_{g}$ is called the augmentation mapping of $RG$, and its kernel $\Delta(G) = \{\sum_{g \in G}\lambda_{g}g \in RG \mid \sum_{g \in G} \lambda_{g} = 0\}$ is called the augmentation ideal of $RG$. We require the following generalized notion: 
\begin{definition}\label{definition 2.4}
For a subgroup $H$ of $G$, define $$\Delta '(H) = \{\sum_{g \in G} \lambda_{g} g \in RG \mid \sum_{g \in H} \lambda_{g} = 0 \hspace{0.1cm} \text{\&} \hspace{0.1cm} \sum_{g \in G \setminus H} \lambda_{g} = 0\}.$$ Note that $\Delta '(H)$ is an $R$-submodule of the $R$-module $RG$ and $\Delta '(G) = \Delta(G)$. 
\end{definition}

For an element $\beta \in RG$, the subalgebra $C(\beta) = \{\alpha \in RG \mid \alpha \beta = \beta \alpha\}$ of $RG$ is called the centralizer of $\beta$ in $RG$. In this paper, we need another terminology:
\begin{definition}\label{definition 2.5}
We call the set $$\bar{C}(\beta) = \{\alpha \in \mathbb{F}G \mid \alpha \beta = - \beta \alpha\}$$  as the anti-centralizer of $\beta$ in $RG$, and it is an $R$-submodule of the $R$-module $RG$.
\end{definition} 

The proof of the following lemma can be found in \cite{Salahshour2020} (see Proposition $2.4$).
\begin{lemma}\th\label{lemma 2.6}
\begin{enumerate}
\item[(i)] If $n$ is even, then $V_{8n}$ has $2n+6$ conjugacy classes given by 
\begin{equation*}
\begin{aligned}
& \{1\}, ~~~ \{a^{n}\}, ~~~ \{b^{2}\}, ~~~ \{a^{n}b^{2}\}, ~~~ \{a^{2k},  a^{-2k}\} ~ for ~ 1 \leq k \leq \frac{n}{2} - 1, \\ &\quad \{a^{2k-1}, a^{-2k+1}b^{2}\} ~ for ~ 1 \leq k \leq n, ~~~ \{a^{2k}b^{2},  a^{-2k}b^{2}\} ~ \text{for} ~ 1 \leq k \leq \frac{n}{2}-1, \\ &\quad b^{V_{8n}} = \{a^{4k}b, a^{4k+2}b^{-1} \mid 1 \leq k \leq \frac{n}{2}\}, ~~~ (ab)^{V_{8n}} = \{a^{4k+1}b, a^{4k+3}b^{-1} \mid 1 \leq k \leq \frac{n}{2}\}, \\ &\quad (b^{3})^{V_{8n}} = \{a^{4k+2}b, a^{4k}b^{-1} \mid 1 \leq k \leq \frac{n}{2}\}, ~~~ (ab^{3})^{V_{8n}} = \{a^{4k+3}b, a^{4k+1}b^{-1} \mid 1 \leq k \leq \frac{n}{2}\}.
\end{aligned}
\end{equation*}

\item[(ii)] If $n$ is odd, then $V_{8n}$ has $2n+3$ conjugacy classes given by 
\begin{equation*}
\begin{aligned}
& \{1\}, ~~~ \{b^{2}\}, ~~~ \{a^{2k},  a^{-2k}\} ~ for ~ 1 \leq k \leq \frac{n-1}{2}, ~~~ \{a^{2k-1},  a^{-2k+1}b^{2}\} ~ for ~ 1 \leq k \leq n, \\ &\quad \{a^{2k}b^{2},  a^{-2k}b^{2}\} ~ for ~ 1 \leq k \leq \frac{n-1}{2}, ~~~ b^{V_{8n}} = \{a^{2k}b,  a^{2k}b^{-1} \mid 1 \leq k \leq n\}, \\ &\quad (ab)^{V_{8n}} = \{a^{2k+1}b,  a^{2k+1}b^{-1} \mid 1 \leq k \leq n\}.
\end{aligned}
\end{equation*}
\end{enumerate}
\end{lemma}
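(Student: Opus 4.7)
The plan is to derive a normal form for elements of $V_{8n}$, identify the center, establish two key conjugation identities, and then enumerate orbits by case analysis on the normal form. From $ba = a^{-1}b^{-1}$ I would first derive $ab = b^{-1}a^{-1}$ and $bab^{-1} = a^{-1}b^{-2}$; applying the latter twice gives $b^2 a b^{-2} = a$, so $b^2 \in Z(V_{8n})$. Centrality of $b^2$ lets every element be written uniquely as $a^i b^j$ with $0 \le i < 2n$ and $0 \le j \le 3$, confirming $|V_{8n}| = 8n$. Next, $b a^n b^{-1} = (bab^{-1})^n = a^{-n} b^{-2n}$ equals $a^n$ if and only if $n$ is even, so $a^n$ and $a^n b^2$ are central precisely in that case; this accounts for the singleton classes in the lemma.

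The two main conjugation formulas, proved by induction (using centrality of $b^2$), are
\[
a^k b a^{-k} = a^{2k} b^{(-1)^k}, \qquad b (a^i b^j) b^{-1} = a^{-i}\, b^{\,j - 2i}.
\]
Thus conjugation by $a$ shifts the $a$-exponent by $+2$ and toggles $b \leftrightarrow b^{-1}$, while conjugation by $b$ inverts $i$ and changes $j$ by $-2i \pmod 4$ (i.e.\ by $0$ if $i$ is even, by $2$ if $i$ is odd). For $j \in \{0, 2\}$, $a$-conjugation is trivial, and the $b$-rule produces the pairs $\{a^i, a^{-i}\}$ and $\{a^i b^2, a^{-i} b^2\}$ for even $i$, and merges odd-index elements of the two layers into the mixed pairs $\{a^{2k-1}, a^{-(2k-1)} b^2\}$. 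For $j \in \{1, 3\}$, the $\langle a \rangle$-orbit of $a^i b$ is $\{a^{i + 2k} b^{(-1)^k}\}$: the $b$-terms occupy the coset $i + 4\mathbb{Z}$ and the $b^{-1}$-terms the coset $i + 2 + 4\mathbb{Z}$ in $\mathbb{Z}/2n\mathbb{Z}$. When $n$ is even, $\gcd(4, 2n) = 4$, each coset contains $n/2$ residues, and $b$-conjugation (which either leaves $j$ alone or sends $b \leftrightarrow b^{-1}$) closes the orbit into a class of size $n$; the four available cosets yield the four classes $b^{V_{8n}}, (ab)^{V_{8n}}, (b^3)^{V_{8n}}, (ab^3)^{V_{8n}}$. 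When $n$ is odd, $\gcd(4, 2n) = 2$ and these four cosets fuse into two, producing just $b^{V_{8n}}$ and $(ab)^{V_{8n}}$, each of size $2n$.

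The principal obstacle is the reflection case $j \in \{1, 3\}$, where one must carefully couple the parity of $k$ in $a^k(\cdot)a^{-k}$ to the $b \leftrightarrow b^{-1}$ toggle, and track how the reachable residues in $\mathbb{Z}/2n\mathbb{Z}$ under $i \mapsto i + 4$ depend on $\gcd(4, 2n)$, i.e.\ on the parity of $n$. Once this bookkeeping is done, a final tally confirms the stated counts: $8n$ elements distributed among $4 + (n/2 - 1) + n + (n/2 - 1) + 4 = 2n + 6$ classes when $n$ is even, and $2 + (n-1)/2 + n + (n-1)/2 + 2 = 2n + 3$ classes when $n$ is odd.
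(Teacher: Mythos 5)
The paper does not prove this lemma at all: it simply cites Proposition 2.4 of Salahshour--Ashrafi, so your direct computation is a genuinely self-contained alternative, and its overall architecture (normal form $a^i b^j$, centrality of $b^2$, the two conjugation rules $a^k b a^{-k} = a^{2k} b^{(-1)^k}$ and $b a^i b^{-1} = a^{-i} b^{2i}$, then an orbit count split on the parity of $n$ via $\gcd(4,2n)$) is sound; I verified the conjugation formulas and both tallies ($2n+6$ classes and $8n$ elements for $n$ even, $2n+3$ and $8n$ for $n$ odd) against the stated class list. One step needs repair: you claim that applying $bab^{-1} = a^{-1}b^{-2}$ ``twice'' yields $b^2 a b^{-2} = a$, but iterating that single relation is circular --- it gives $b^2 a b^{-2} = (b a^{-1} b^{-1}) b^{-2} = (b^2 a) b^{-2}$, which is the quantity you started with. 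Centrality of $b^2$ genuinely requires the second defining relation: from $ba = a^{-1}b^{-1}$ one gets $bab^{-1} = a^{-1}b^{2}$, and from $b^{-1}a = a^{-1}b$ one gets $b^{-1}ab = a^{-1}b^{2}$; equating these gives $b^2 a b^{-2} = a$. (Indeed, without the second relation the presentation defines a larger group in which $b^2$ need not be central, so this is not merely cosmetic.) With that one-line fix, and with the harmless observation that your exponent $j - 2i$ agrees with $j + 2i$ modulo $4$, the argument goes through as written.
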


The following lemma is used in the proof of \th\ref{lemma 2.8}.
\begin{lemma}\th\label{lemma 2.7}
Let $\mathbb{F}$ be a field with $\ch(\mathbb{F}) \neq 2$, $G$ be a finite group, and $g \in G$ be an element of order $n$. If $n$ is even, then $\bar{C}(g^{-1}) = \bar{C}(g)$. Conversely, if $\bar{C}(g) \neq \{0\}$ and $\bar{C}(g^{-1}) = \bar{C}(g)$, then $n$ is even.
\end{lemma}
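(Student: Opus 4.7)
The plan is to exploit the simple observation that the defining relation $\alpha g = -g\alpha$ propagates to all powers of $g$: a straightforward induction on $k$ yields $\alpha g^{k} = (-1)^{k} g^{k} \alpha$ for every $k \geq 0$. Once this is in hand, both directions follow from plugging in appropriate values of $k$ and using $g^{n} = 1$.

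For the forward direction, I assume $n$ is even and take $\alpha \in \bar{C}(g)$. Applying the recursion with $k = n-1$ and noting that $n-1$ is odd gives $\alpha g^{-1} = \alpha g^{n-1} = -g^{n-1} \alpha = -g^{-1}\alpha$, so $\alpha \in \bar{C}(g^{-1})$. The reverse inclusion is symmetric, since $g^{-1}$ is an element of the same (even) order $n$ in $G$, so the identical argument applied with $g^{-1}$ in place of $g$ shows $\bar{C}(g^{-1}) \subseteq \bar{C}(g)$.

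For the converse, I pick any nonzero $\alpha \in \bar{C}(g)$ and apply the same recursion with $k = n$: this yields $\alpha = \alpha g^{n} = (-1)^{n} g^{n} \alpha = (-1)^{n} \alpha$. If $n$ were odd, this would give $2\alpha = 0$, whence $\alpha = 0$ because $\ch(\mathbb{F}) \neq 2$, contradicting the choice of $\alpha$. Hence $n$ is even. (Note that the hypothesis $\bar{C}(g^{-1}) = \bar{C}(g)$ is in fact not needed for this converse; the condition $\bar{C}(g) \neq \{0\}$ alone suffices, given $\ch(\mathbb{F}) \neq 2$.)

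There is no real obstacle: the only subtlety is being careful about the parity of $n-1$ versus $n$ when reading off the sign $(-1)^{k}$, and remembering that $\ch(\mathbb{F}) \neq 2$ is exactly what is needed to deduce $\alpha = 0$ from $2\alpha = 0$ in the converse. The induction itself is a one-line computation, so the proof is essentially a bookkeeping exercise around $g^{-1} = g^{n-1}$ and the relation $g^{n} = 1$.
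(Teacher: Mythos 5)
Your proof is correct and rests on the same key computation as the paper's, namely the induction $\alpha g^{k} = (-1)^{k} g^{k}\alpha$; the forward direction is identical. The one genuine difference is in the converse: the paper compares the two expressions for $\alpha g^{-1}$ coming from $\alpha \in \bar{C}(g)$ (which gives $(-1)^{n-1}g^{-1}\alpha$) and from $\alpha \in \bar{C}(g^{-1})$ (which gives $-g^{-1}\alpha$), and so uses both hypotheses, whereas you substitute $k = n$ directly and get $\alpha = (-1)^{n}\alpha$ from $g^{n} = 1$ alone. Your version is slightly cleaner and, as you note, establishes the stronger statement that $\bar{C}(g) \neq \{0\}$ by itself forces $n$ to be even when $\ch(\mathbb{F}) \neq 2$, making the hypothesis $\bar{C}(g^{-1}) = \bar{C}(g)$ in the converse redundant. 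Both arguments use $\ch(\mathbb{F}) \neq 2$ in the same essential place, to pass from $2\alpha = 0$ to $\alpha = 0$.
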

\begin{proof}
First, let $n$ be even and $\alpha \in \bar{C}(g)$. Then $\alpha g = -g \alpha$. By induction, $\alpha g^{k} = (-1)^{k} g^{k} \alpha$ for all $k \in \mathbb{N}$. So $\alpha g^{-1} = \alpha g^{n-1} = (-1)^{n-1}g^{n-1}\alpha = - g^{-1} \alpha$. Therefore, $\alpha \in \bar{C}(g^{-1})$. Conversely, let $\alpha \in \bar{C}(g^{-1})$. Now replacing the role of $g$ by that of $g^{-1}$ above (as $|g| = n = |g^{-1}|$), we get that $\alpha \in \bar{C}(g)$. 

For the converse of the lemma, let $\bar{C}(g) \neq \{0\}$ and $\bar{C}(g) = \bar{C}(g^{-1})$. Then some non-zero $\alpha \in \bar{C}(g)$ exists.  Since $\alpha g = - g \alpha$, as seen above, $\alpha g^{-1} = (-1)^{n-1} g^{-1} \alpha$. But again, since $\alpha \in \bar{C}(g^{-1})$, so $\alpha g^{-1} = -g^{-1} \alpha$. So we get that $(-1)^{n-1} g^{-1} \alpha = -g^{-1} \alpha$. Pre-multiplying both sides by $g$, we have $(-1)^{n-1}\alpha = - \alpha$. This gives $(-1)^{n-1} = -1$ because $\{\alpha\}$ is an $\mathbb{F}$-linearly independent subset of $\mathbb{F}G$. Therefore, $n$ is even.
\end{proof}

The following lemma is crucial in proving our main \th\ref{theorem 3.1}.
\begin{lemma}\th\label{lemma 2.8}
Let $\mathbb{F}$ be a field of characteristic $0$ or $p$, where $p$ is an odd rational prime. Then, the following statements hold:
\begin{enumerate}
\item[(i)] The set 
\begin{equation*}
\begin{aligned}
\bar{\mathcal{B}}(b) & = \{(a^{2k} - a^{-2k})b^{j} \mid 1 \leq k \leq \lfloor \frac{n-1}{2} \rfloor, 0 \leq j \leq 3\} \\ &\quad \cup \{(a^{2k-1} - a^{-(2k-1)}b^{2}), ~ (a^{2k-1} - a^{-(2k-1)}b^{2})b \mid 1 \leq k \leq \lfloor \frac{n+1}{2} \rfloor\} \\ &\quad \cup \{(a^{-(2k-1)} - a^{2k-1}b^{2}), ~ (a^{-(2k-1)} - a^{2k-1}b^{2})b \mid 1 \leq k \leq \lceil \frac{n-1}{2} \rceil\}
\end{aligned}
\end{equation*} is a basis of $\bar{C}(b)$ over $\mathbb{F}$.

\item[(ii)] $\bar{C}(b^{-1}) = \bar{C}(b)$.

\item[(iii)] The set 
\begin{equation*}
\begin{aligned}
\bar{\mathcal{B}}(a^{-1}b) & = \{(a^{2k} - a^{-2k}), ~ (a^{2k} - a^{-2k})b^{2}, ~ (a^{2k-1} - a^{-(2k+1)})b, ~ (a^{2k-1} - a^{-(2k+1)})b^{3} \\ &\quad \mid 1 \leq k \leq \lfloor \frac{n-1}{2} \rfloor\} \cup \{(a^{2k-1} - a^{-(2k-1)}b^{2}, ~ (a^{2k-2} - a^{-2k}b^{2})b \mid 1 \leq k \leq n\}
\end{aligned}
\end{equation*} is a basis of $\bar{C}(a^{-1}b)$ over $\mathbb{F}$.

\item[(iv)] $\bar{C}(ba) = \bar{C}(b^{-1}a)$.
\end{enumerate}
\end{lemma}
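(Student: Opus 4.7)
The plan is to reduce each part to a coefficient-level linear algebra problem in the natural basis $\{a^i b^j : 0\le i<2n,\ 0\le j<4\}$ of $\mathbb{F}V_{8n}$, and to analyze the resulting linear system via orbits of an explicit involution on the index set $\mathbb{Z}_{2n}\times\mathbb{Z}_4$. The groundwork is to extract from the defining relations the commutation identity $bab^{-1}=a^{-1}b^2$ and the centrality of $b^2$ in $V_{8n}$; by induction on $k$ this yields $ba^k=a^{-k}b^{2k+1}$ for every $k\in\mathbb{Z}$, and hence the clean normal-form product
\begin{equation*}
a^I b^J\cdot a^K b^L=\begin{cases}a^{I+K}b^{J+L},&J\text{ even},\\ a^{I-K}b^{2K+J+L},&J\text{ odd},\end{cases}
\end{equation*}
which is used throughout.

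For (i), write $\alpha=\sum_{i,j}\lambda_{i,j}a^ib^j$. The formula above gives
\begin{equation*}
\alpha b+b\alpha=\sum_{i,j}\lambda_{i,j}\bigl(a^ib^{j+1}+a^{-i}b^{2i+j+1}\bigr),
\end{equation*}
and equating the coefficient of each $a^Ib^J$ to zero (via the change of variables $I\mapsto i$, $J-1\mapsto j$) yields the single condition $\lambda_{i,j}+\lambda_{-i,\,j+2i}=0$ on $\mathbb{Z}_{2n}\times\mathbb{Z}_4$. Thus $\bar{C}(b)$ is the $(-1)$-eigenspace of the involution $\sigma(i,j)=(-i,j+2i)$. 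The fixed set of $\sigma$ is $\{0\}\times\mathbb{Z}_4$ together with $\{n\}\times\mathbb{Z}_4$ when $n$ is even, forcing $\lambda_{i,j}=0$ there; every remaining orbit has size two and contributes one degree of freedom, giving a basis vector $a^ib^j-a^{-i}b^{j+2i}$ per representative. For $i=2k$ the shift $2i\equiv 0\pmod{4}$ keeps $j$ fixed, producing the vectors $(a^{2k}-a^{-2k})b^j$; for $i=\pm(2k-1)$ the shift $2i\equiv 2\pmod{4}$ swaps $j$ with $j+2$, producing the remaining four families. Choosing the ranges $1\le k\le\lfloor(n-1)/2\rfloor$ (for the even-$i$ block), $1\le k\le\lfloor(n+1)/2\rfloor$ (for positive odd-$i$ representatives), and $1\le k\le\lceil(n-1)/2\rceil$ (for negative odd-$i$ representatives) covers every orbit exactly once; when $n$ is odd, the ``collapsed'' orbits $\{(n,0),(n,2)\}$ and $\{(n,1),(n,3)\}$ are picked up at $k=(n+1)/2$ in the first odd-$i$ list. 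A direct count then confirms the total equals $4n-4$ for even $n$ and $4n-2$ for odd $n$, matching the size of the displayed basis.

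Part (ii) is immediate from \th\ref{lemma 2.7} since $|b|=4$ is even. For (iv), the centrality of $b^2$ together with $b^{-2}=b^2$ yields $b^{-1}a=b^2(ba)=(ba)b^2$; then $\alpha\in\bar{C}(ba)$ implies $\alpha(b^{-1}a)=\alpha(ba)b^2=-(ba)\alpha b^2=-(b^{-1}a)\alpha$, and the reverse inclusion is symmetric. Part (iii) repeats the recipe of (i) with $\gamma=a^{-1}b$ in place of $b$; the extra $a^{-1}$ causes the coefficient equation to split by the parity of $j$ into $\lambda_{i,j}=-\lambda_{-i,\,j+2i}$ on the $j$-even slab and $\lambda_{i,j}=-\lambda_{-i-2,\,j+2i+2}$ on the $j$-odd slab. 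Each slab carries its own involution, whose fixed-point and orbit analyses proceed exactly as in (i) and yield the four families listed in $\bar{\mathcal{B}}(a^{-1}b)$. The main obstacle in (i) and (iii) is the orbit bookkeeping: handling the parities of $n$ uniformly and checking that the prescribed floor/ceiling index ranges list every orbit exactly once, without overlap between the sub-blocks of the basis.
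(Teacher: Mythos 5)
Your proposal is correct and follows essentially the same route as the paper: expand $\alpha$ in the basis $\{a^ib^j\}$, use $ba^k=a^{-k}b^{2k+1}$ to equate coefficients of $\alpha\beta+\beta\alpha$, and read off one free parameter per size-two orbit of the resulting index involution --- the paper writes these same orbit relations out case by case and reduces ``in a finite number of steps,'' whereas your $(-1)$-eigenspace and orbit-counting framing makes the spanning argument and the dimension counts $4n-4$ (for $n$ even) and $4n-2$ (for $n$ odd) more transparent. The only slip is cosmetic: $\bar{\mathcal{B}}(a^{-1}b)$ consists of six families rather than four, but your two parity slabs do account for all of them, the $j$-even slab producing the families $(a^{2k}-a^{-2k})$, $(a^{2k}-a^{-2k})b^{2}$, $(a^{2k-1}-a^{-(2k-1)}b^{2})$ and the $j$-odd slab the families $(a^{2k-1}-a^{-(2k+1)})b$, $(a^{2k-1}-a^{-(2k+1)})b^{3}$, $(a^{2k-2}-a^{-2k}b^{2})b$.
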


\begin{proof}
(i) It can be easily verified that $\bar{\mathcal{B}}(b)$ is an $\mathbb{F}$-linearly independent subset of $\mathbb{F}V_{8n}$. Now let $\alpha \in \bar{C}(b)$ and $\alpha = \sum_{i,j}\lambda_{i,j} a^{i}b^{j}$ for some $\lambda_{i,j} \in \mathbb{F}$ ($0 \leq i \leq 2n-1$, $0 \leq j \leq 3$). Put $\lambda_{(2n),j} = \lambda_{0,j}$ for $j \in \{0, 1, 2, 3\}$. Then from $\alpha b = -b \alpha$, we will get that $$\lambda_{i,0} = \begin{cases}
- \lambda_{(2n-i),2} & \text{if $i$ is odd} \\
- \lambda_{(2n-i),0} & \text{if $i$ is even}
\end{cases}, \hspace{1cm} \lambda_{i,1} = \begin{cases}
- \lambda_{(2n-i),3} & \text{if $i$ is odd} \\
- \lambda_{(2n-i),1} & \text{if $i$ is even}
\end{cases},$$ $$\lambda_{i,2} = \begin{cases}
- \lambda_{(2n-i),0} & \text{if $i$ is odd} \\
- \lambda_{(2n-i),2} & \text{if $i$ is even}
\end{cases}, \hspace{1cm} \lambda_{i,3} = \begin{cases}
- \lambda_{(2n-i),1} & \text{if $i$ is odd} \\
- \lambda_{(2n-i),3} & \text{if $i$ is even}
\end{cases}.$$
Using this information, we can calculate in a finite number of steps that when $n$ is even, 
\begin{equation*}
\begin{aligned}
\alpha & = \sum_{k=1}^{\frac{n}{2}} \lambda_{(2k-1),0} (a^{2k-1} - a^{-(2k-1)}b^{2}) + \sum_{k=1}^{\frac{n}{2}} \lambda_{(2n-(2k-1)),0} (a^{-(2k-1)} - a^{2k-1}b^{2}) \\ &\quad + \sum_{k=1}^{\frac{n}{2}} \lambda_{(2k-1),1}  (a^{2k-1} - a^{-(2k-1)}b^{2})b + \sum_{k=1}^{\frac{n}{2}} \lambda_{(2n-(2k-1)),1} (a^{-(2k-1)} - a^{2k-1}b^{2})b \\ &\quad + \sum_{k=1}^{\frac{n}{2}-1} \lambda_{(2k),0} (a^{2k} - a^{-2k}) + \sum_{k=1}^{\frac{n}{2}-1} \lambda_{(2k),1} (a^{2k} - a^{-2k})b \\ &\quad + \sum_{k=1}^{\frac{n}{2}-1} \lambda_{(2k),2} (a^{2k} - a^{-2k})b^{2} + \sum_{k=1}^{\frac{n}{2}-1} \lambda_{(2k),3} (a^{2k} - a^{-2k})b^{3}.
 \end{aligned}
\end{equation*} and when $n$ is odd,
\begin{equation*}
\begin{aligned}
\alpha & = \sum_{k=1}^{\frac{n+1}{2}} \lambda_{(2k-1),0} \left(a^{2k-1} - a^{-(2k-1)}b^{2}\right) + \sum_{k=1}^{\frac{n-1}{2}} \lambda_{(2n-(2k-1)),0} \left(a^{-(2k-1)} - a^{2k-1}b^{2}\right) \\ &\quad + \sum_{k=1}^{\frac{n+1}{2}} \lambda_{(2k-1),1} \left(a^{2k-1} - a^{-(2k-1)}b^{2}\right)b + \sum_{k=1}^{\frac{n-1}{2}} \lambda_{(2n-(2k-1)),1} \left(a^{-(2k-1)} - a^{2k-1}b^{2}\right)b \\ &\quad + \sum_{k=1}^{\frac{n-1}{2}} \lambda_{(2k),0} (a^{2k} - a^{-2k}) + \sum_{k=1}^{\frac{n-1}{2}} \lambda_{(2k),1} (a^{2k} - a^{-2k})b \\ &\quad + \sum_{k=1}^{\frac{n-1}{2}} \lambda_{(2k),2} (a^{2k} - a^{-2k})b^{2} + \sum_{k=1}^{\frac{n-1}{2}} \lambda_{(2k),3} (a^{2k} - a^{-2k})b^{3}.
\end{aligned}
\end{equation*}
So in both cases, $\alpha \in \bar{C}(b)$ can be written as an $\mathbb{F}$-linear combination of the elements in $\bar{\mathcal{B}}(b)$. Therefore, $\bar{\mathcal{B}}(b)$ is a basis of $\bar{C}(b)$ over $\mathbb{F}$.

(ii) The order of $b$ is $4$ which is even. Therefore, by \th\ref{lemma 2.7}, $\bar{C}(b^{-1}) = \bar{C}(b)$.

(iii) Let $\alpha \in \bar{C}(a^{-1}b)$ and $\alpha = \sum_{i,j} \lambda_{i,j} a^{i} b^{j}$ for some $\lambda_{i,j} \in \mathbb{F}$ ($0 \leq i \leq 2n-1$, $0 \leq j \leq 3$). From $\alpha (a^{-1}b) = -(a^{-1}b) \alpha$, we will get $$\sum_{i=0}^{2n-1}\lambda_{i,1} a^{i+1} = - \sum_{\substack{0 \leq i \leq 2n-1 \\ \text{$i$ odd}}} \lambda_{i,1} a^{-(i+1)} - \sum_{\substack{0 \leq i \leq 2n-1 \\ \text{$i$ even}}} \lambda_{i,3} a^{-(i+1)},$$ 
$$\sum_{i=0}^{2n-1}\lambda_{i,0} a^{i-1}b = - \sum_{\substack{0 \leq i \leq 2n-1 \\ \text{$i$ even}}} \lambda_{i,0} a^{-(i+1)}b - \sum_{\substack{0 \leq i \leq 2n-1 \\ \text{$i$ odd}}} \lambda_{i,2} a^{-(i+1)}b,$$ $$\sum_{i=0}^{2n-1} \lambda_{i,3} a^{i+1}b^{2} = - \sum_{\substack{0 \leq i \leq 2n-1 \\ \text{$i$ even}}} \lambda_{i,1} a^{-(i+1)}b^{2} - \sum_{\substack{0 \leq i \leq 2n-1 \\ \text{$i$ odd}}} \lambda_{i,3} a^{-(i+1)}b^{2},$$ $$\sum_{i=0}^{2n-1} \lambda_{i,2} a^{i-1}b^{3} = - \sum_{\substack{0 \leq i \leq 2n-1 \\ \text{$i$ odd}}} \lambda_{i,0} a^{-(i+1)}b^{3} - \sum_{\substack{0 \leq i \leq 2n-1 \\ \text{$i$ even}}} \lambda_{i,2} a^{-(i+1)}b^{3}.$$
From the first equation, we get the following: \begin{itemize}
\item[•] for each $i$ with $1 \leq i \leq 2n-1$ and $i$ odd, $\lambda_{(i-1),1} = - \lambda_{(2n-i-1),3}$;
\item[•] for each $i$ with $2 \leq i \leq 2n-2$, $i \neq n$ and $i$ even, $\lambda_{(i-1),1} = - \lambda_{(2n-i-1),1}$;
\item[•] $\lambda_{(n-1),1} = 0$ if $n$ is even and $\lambda_{(2n-1),1} = 0$.
\end{itemize} Similarly, from the third equation, we will get that \begin{itemize}
\item[•] for each $i$ with $1 \leq i \leq 2n-1$ and $i$ odd, $\lambda_{(i-1),3} = - \lambda_{(2n-i-1),1}$;
\item[•] for each $i$ with $2 \leq i \leq 2n-2$, $i \neq n$ and $i$ even, $\lambda_{(i-1),3} = - \lambda_{(2n-i-1),3}$;
\item[•] $\lambda_{(n-1),3} = 0$ if $n$ is even and $\lambda_{(2n-1),3} = 0$.
\end{itemize}
From the second equation, we get \begin{itemize}
\item[•] for each $i$ with $1 \leq i \leq 2n-1$ and $i$ odd, $\lambda_{i,0} = - \lambda_{(2n-i),2}$;
\item[•] for each $i$ with $2 \leq i \leq 2n-2$, $i \neq n$ and $i$ even, $\lambda_{i,0} = - \lambda_{(2n-i),0}$;
\item[•] $\lambda_{n,0} = 0$ if $n$ is even and $\lambda_{0,0} = 0$.
\end{itemize} Similarly, from the fourth equation, we will have \begin{itemize}
\item[•] for each $i$ with $1 \leq i \leq 2n-1$ and $i$ odd, $\lambda_{i,2} = - \lambda_{(2n-i),0}$;
\item[•] for each $i$ with $2 \leq i \leq 2n-2$, $i \neq n$ and $i$ even, $\lambda_{i,2} = - \lambda_{(2n-i),2}$;
\item[•] $\lambda_{n,2} = 0$ if $n$ is even and $\lambda_{0,2} = 0$.
\end{itemize} 
Now, using the above information, we can use a finite number of calculations to determine that \begin{equation*}
\begin{aligned}
\alpha & = \sum_{\substack{1 \leq i \leq 2n-1 \\ \text{$i$ odd}}} \lambda_{i,0} (a^{i} - a^{-i}b^{2}) + \sum_{\substack{2 \leq i \leq n-1 \\ \text{$i$ even}}} \lambda_{i,0} (a^{i} - a^{-i}) + \sum_{\substack{1 \leq i \leq 2n-1 \\ \text{$i$ odd}}}\lambda_{(i-1),1} (a^{i-1} - a^{2n-i-1}b^{2})b \\ &\quad + \sum_{\substack{2 \leq i \leq n-1 \\ \text{$i$ even}}}\lambda_{(i-1),1} (a^{i-1} - a^{2n-i-1})b + \sum_{\substack{2 \leq i \leq n-1 \\ \text{$i$ even}}}\lambda_{i,2} (a^{i} - a^{-i})b^{2} \\ &\quad + \sum_{\substack{2 \leq i \leq n-1 \\ \text{$i$ even}}}\lambda_{(i-1),3} (a^{i-1} - a^{2n-i-1})b^{3}.
              \end{aligned}
                 \end{equation*}
Therefore, the set $\bar{\mathcal{B}}(a^{-1}b)$ spans $\bar{C}(a^{-1}b)$ over $\mathbb{F}$. Also, $\bar{\mathcal{B}}(a^{-1}b)$ is an $\mathbb{F}$-linearly independent subset of $\bar{C}(a^{-1}b)$. Therefore, $\bar{\mathcal{B}}(a^{-1}b)$ is a basis of $\bar{C}(a^{-1}b)$ over $\mathbb{F}$.

(iv) Let $\alpha \in \bar{C}(ba)$. Then $\alpha (ba) = - (ba) \alpha$. So $\alpha (b^{-1}a) = b^{2}\alpha (ba) = b^{2} (-(ba)\alpha) = -(b^{-1}a)\alpha$ (as $b^{2} \in Z(V_{8n})$). Therefore, $\alpha \in \bar{C}(b^{-1}a)$. Conversely, let $\alpha \in \bar{C}(b^{-1}a)$. Then $\alpha (b^{-1}a) = - (b^{-1}a) \alpha$. So $\alpha (ba) =  b^{2}\alpha(b^{-1}a) = b^{2}(-(b^{-1}a)\alpha) = - (ba) \alpha$. Therefore, $\alpha \in \bar{C}(ba)$.
\end{proof}

\section{Main Results}\label{section 3}
The group $V_{8n}$ has the presentation: $$V_{8n} = \langle a, b \mid a^{2n} = b^{4} = 1, ba = a^{-1}b^{-1}, b^{-1}a = a^{-1}b \rangle.$$
So $V_{8n} = \{a^{i}b^{j} \mid 0 \leq i \leq 2n-1, 0 \leq j \leq 3\}$. In this section, we classify all the derivations, inner as well as outer, of the group algebra $\mathbb{F}V_{8n}$ over a field $\mathbb{F}$ of characteristic $0$ or an odd rational prime $p$.

\begin{theorem}\th\label{theorem 3.1}
Let $\mathbb{F}$ be a field and $p$ be an odd rational prime. 
\begin{enumerate}
\item[(i)] If $\ch(\mathbb{F}) = 0$ or $p$ with $\gcd(n,p)=1$, then the dimension of $\der_{\mathbb{F}}(\mathbb{F}V_{8n})$ over $\mathbb{F}$ is $3(2n-1)$ if $n$ is odd and $6(n-1)$ if $n$ is even, and a basis is $$\mathcal{B} = \{d_{(\bar{a}, \bar{b})} \mid (\bar{a}, \bar{b}) \in B_{1} \cup B_{2} \cup B_{3} \cup B_{4}\},$$ where 
\begin{equation*}
\begin{aligned}
B_{1} & = \{((a^{-2k} - a^{2k})b,0),   ~~   ((a^{2k-1} - a^{-(2k+1)})b,  (a^{2k} - a^{-2k})),  ~~   ((a^{-2k} - a^{2k})b^{3},0),  \\ &\quad  ((a^{2k-1} - a^{-(2k+1)})b^{3},  (a^{2k} - a^{-2k}) b^{2}), ~~  (0, (a^{2k} - a^{-2k}) b),   ~~  (0,(a^{2k} - a^{-2k}) b^{3})  \\ &\quad  \mid 1 \leq k \leq \lfloor \frac{n-1}{2} \rfloor\},
\\ B_{2} & = \{((a^{2k-2}-a^{-2k}b^{2})b, (a^{2k-1} - a^{-(2k-1)}b^{2})),  ~~  (0, (a^{-(2k-1)} - a^{2k-1}b^{2})b) \\ &\quad  \mid 1 \leq k \leq \lfloor \frac{n+1}{2} \rfloor\},
\end{aligned}
\end{equation*}
\begin{equation*}
\begin{aligned}
B_{3} & = \{((a^{-2k} - a^{2k-2}b^{2})b, (a^{-(2k-1)} - a^{2k-1}b^{2})),   ~~ (0,(a^{-(2k-1)} - a^{2k-1}b^{2})b) \\ &\quad \mid 1 \leq k \leq \lceil \frac{n-1}{2} \rceil\},
\\ B_{4} & = \{((a^{-(2k-1)} - a^{2k-1}b^{2})b,0) \mid 1 \leq k \leq n\}.
\end{aligned}
\end{equation*}

\item[(ii)] If $\ch(\mathbb{F}) = p$, with $\gcd(n,p) \neq 1$, then the dimension of $\der_{\mathbb{F}}(\mathbb{F}V_{8n})$ over $\mathbb{F}$ is $4(2n-1)$ if $n$ is odd and $8(n-1)$ if $n$ is even and a basis is $$\mathcal{B}' = \{d_{(\bar{a}, \bar{b})} \mid (\bar{a}, \bar{b}) \in  B_{1}' \cup B_{2}' \cup B_{3}' \cup B_{4}'\},$$ where
\begin{equation*}
\begin{aligned}
B_{1}' & = \{((a^{2k-1} - a^{-(2k+1)})b, (a^{2k} - a^{-2k})),       ~~ ((a^{2k+1} - a^{-(2k-1)}), (a^{2k} - a^{-2k}) b),   \\ &\quad     ((a^{2k-1} - a^{-(2k+1)})b^{3}, (a^{2k} - a^{-2k}) b^{2}),    ~~    ((a^{2k+1} - a^{-(2k-1)})b^{2}, (a^{2k} - a^{-2k}) b^{3}),    \\ &\quad ((a^{-2k} - a^{2k})b^{3},0),     ~~     ((a^{-2k} - a^{2k})b,0),     ~~     ((a^{-(2k-1)} - a^{2k+1})b^{2},0),    \\ &\quad     ((a^{-(2k-1)} - a^{2k+1}),0) \mid 1 \leq k \leq \lfloor \frac{n-1}{2} \rfloor\},
\\ B_{2}' & = \{((a^{2k-2} - a^{-2k}b)b^{2}, (a^{2k-1} - a^{-(2k-1)}b^{2})),   \\ &\quad    ((a^{2k} - a^{-(2k-2)}b^{2}),  (a^{2k-1} - a^{-(2k-1)}b^{2})b) \mid 1 \leq k \leq \lfloor \frac{n+1}{2} \rfloor\},
\\ B_{3}' & = \{((a^{-2k} - a^{2k-2}b^{2})b, (a^{-(2k-1)} - a^{2k-1}b^{2})),   \\ &\quad    ( (a^{-(2k-2)} - a^{2k}b^{2}), (a^{-(2k-1)} - a^{2k-1}b^{2})b)    \mid    1 \leq k \leq \lceil \frac{n-1}{2} \rceil\},    
\\ B_{4}' & = \{((a^{-(2k-1)} - a^{2k-1}b^{2})b,0),     ~~    ((a^{-(2k-2)} - a^{2k}b^{2}),0)  \mid 1 \leq k \leq n\}.
\end{aligned}
\end{equation*}
\end{enumerate}
\end{theorem}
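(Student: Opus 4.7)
The plan is to apply \th\ref{theorem 2.1} with generators $X=\{a,b\}$ and set of relators
$$Y=\{\,a^{2n},\ b^{4},\ baba,\ b^{-1}ab^{-1}a\,\},$$
so that a pair $(\bar a,\bar b)\in (\mathbb{F}V_{8n})^{2}$ determines an $\mathbb{F}$-derivation $d_{(\bar a,\bar b)}$ exactly when $\tilde f(y)=0$ for every $y\in Y$. Expanding via \eqref{eq 2.3} (and using $\tilde f(b^{-1})=-b^{-1}\bar b\,b^{-1}$) produces four equations in $\mathbb{F}V_{8n}$:
\begin{align*}
\text{(R1):}\quad &\textstyle\sum_{i=0}^{2n-1} a^{i}\,\bar a\, a^{2n-1-i}=0,\\
\text{(R2):}\quad &\textstyle\sum_{j=0}^{3} b^{j}\,\bar b\, b^{3-j}=0,\\
\text{(R3):}\quad &\bar b\,(aba)+b\bar a\,(ba)+ba\,\bar b\,a+bab\,\bar a=0,\\
\text{(R4):}\quad &-b^{-1}\bar b\,b^{-1}ab^{-1}a+b^{-1}\bar a\,b^{-1}a-b^{-1}ab^{-1}\bar b\,b^{-1}a+b^{-1}ab^{-1}\bar a=0.
\end{align*}
Writing $\bar a=\sum_{i,j}\mu_{ij}a^{i}b^{j}$ and $\bar b=\sum_{i,j}\nu_{ij}a^{i}b^{j}$, each of (R1)--(R4) becomes an $\mathbb{F}$-linear system in the $\mu_{ij},\nu_{ij}$; the space $\der_{\mathbb{F}}(\mathbb{F}V_{8n})$ is the joint solution set.

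First I would reduce (R2) to an anti-centralizer condition. Since $b^{2}\in Z(V_{8n})$ and $\ch(\mathbb{F})\neq 2$, the four-term sum in (R2) collapses after right-multiplication by $b$ to $2b\bar b\,b+2b^{2}\bar b=0$, equivalent to $b\bar b+\bar b\,b=0$, i.e.\ $\bar b\in\bar{C}(b)$; the admissible $\bar b$'s are then parametrized by \th\ref{lemma 2.8}(i). Next I would simplify (R3) and (R4) using the identities $aba=b^{-1}$, $bab=a^{-1}$ (from $baba=1$) together with $ab^{-1}a=b$, $b^{-1}ab^{-1}=a^{-1}$ (from $b^{-1}ab^{-1}a=1$). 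After right-multiplication by suitable group elements, each of (R3), (R4) takes the shape of a coupling constraint: it determines $\bar a$ in terms of $\bar b$ up to additive elements of an anti-centralizer $\bar{C}(a^{-1}b)$ or $\bar{C}(b^{-1}a)$ (described in \th\ref{lemma 2.8}(iii),(iv)), together with supplementary orbit-sum conditions. The basis vectors listed in $B_{j}$ and $B_{j}'$ will then arise by pairing an anti-centralizer generator for $\bar b$ with its forced partner $\bar a$, augmented by additional pairs $(\bar a,0)$ coming from the $\bar a$-part of the kernel of (R1),(R3),(R4).

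The case distinction between (i) and (ii) emerges from (R1). Expanding $\sum_{i}a^{i}\bar a\,a^{-i}$ requires the action of conjugation by $a$ on each basis element $a^{k}b^{j}$; the defining relations of $V_{8n}$ show that this either preserves the $b^{j}$-component (for $j\in\{0,2\}$) or swaps $b\leftrightarrow b^{-1}$ (for $j\in\{1,3\}$), while shifting the $a$-exponent by $\pm 2$. Consequently (R1) reduces to the vanishing of certain $\langle a\rangle$-orbit sums in $\bar a$. When $\ch(\mathbb{F})=0$ or $\gcd(n,p)=1$, the factor $2n$ arising from the trivial-orbit contributions is invertible in $\mathbb{F}$, forcing those coefficients to zero and yielding case (i). When $p\mid n$, the factor $2n$ vanishes, freeing an additional family of $\mu_{ij}$'s and producing the extra basis elements in $B_{j}'\setminus B_{j}$; this is case (ii).

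To finish, I would verify directly that every pair $(\bar a,\bar b)\in\mathcal{B}$ (resp.\ $\mathcal{B}'$) satisfies (R1)--(R4) and hence defines a derivation by \th\ref{theorem 2.1}; linear independence is immediate on inspecting supports in the $\{a^{i}b^{j}\}$-basis, and a dimension count confirms the totals $3(2n-1)$ or $6(n-1)$ in case (i), and $4(2n-1)$ or $8(n-1)$ in case (ii). The main obstacle will be the analysis of (R3) and (R4): substituting the expansions of $\bar a,\bar b$ yields a large, doubly-indexed linear system in the $\mu_{ij},\nu_{ij}$, and separating it cleanly by the parities of $i$ and of $n$ --- so that the anti-centralizer structure of \th\ref{lemma 2.8} becomes visible in the parametrization of solutions --- will require substantial combinatorial bookkeeping, particularly in unifying the descriptions for odd and even $n$.
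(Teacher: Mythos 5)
Your plan reproduces the paper's proof essentially step for step: the same relators fed into \th\ref{theorem 2.1}, the same reduction of the $b^{4}$, $(ba)^{2}$ and $(b^{-1}a)^{2}$ conditions to the anti-centralizer structure of \th\ref{lemma 2.8} (with $\bar a$ determined by $\bar b$ modulo $b^{-1}\bar{C}(ba)$), and the same dichotomy driven by the invertibility of $2n$ in the analogue of equation (\ref{eq 3.1}). The only quibble is the phrase about conjugation by $a$ ``shifting the $a$-exponent by $\pm 2$'' --- for $j\in\{0,2\}$ it fixes $a^{k}b^{j}$ outright, which is exactly why the trivial orbits contribute the factor $2n$ you then invoke correctly --- and the combinatorial bookkeeping you defer is precisely the bulk of the paper's argument.
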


\begin{proof}
The relators of the group $V_{8n}$ are $a^{2n}, ~ b^{4}, ~ (ba)^{2}, ~ (b^{-1}a)^{2}$. Let $f:X = \{a, b\} \rightarrow \mathbb{F}V_{8n}$ be a map that can be extended to an $\mathbb{F}$-derivation of $\mathbb{F}V_{8n}$. By \th\ref{theorem 2.1}, this is possible if and only if $\tilde{f}(a^{2n})=0, \hspace{0.1cm} \tilde{f}(b^{4}) = 0, \hspace{0.1cm} \tilde{f}((ba)^{2}) = 0, \hspace{0.1cm} \text{and} \hspace{0.1cm} \tilde{f}((b^{-1}a)^{2}) = 0$. Write $f(a)$ as $f(a) = c_{0} + c_{1}b + c_{2}b^{2} + c_{3}b^{3}$ for some $c_{i} \in \mathbb{F} \langle a \rangle$ ($0 \leq i \leq 3$). Then, using (\ref{eq 2.3}), we get
\begin{equation*}
\begin{aligned}
\tilde{f}(a^{2n}) & = 2na^{2n-1}(c_{0}+c_{2}b^{2}) + \sum_{\substack{1 \leq i \leq 2n \\ \text{$i$ odd}}} a^{2i-1} c_{1} b^{3} + \sum_{\substack{1 \leq i \leq 2n \\ \text{$i$ even}}} a^{2i-1} c_{3} b^{3} + \sum_{\substack{1 \leq i \leq 2n \\ \text{$i$ even}}} a^{2i-1} c_{1} b \\ &\quad + \sum_{\substack{1 \leq i \leq 2n \\ \text{$i$ odd}}} a^{2i-1} c_{3} b.
\end{aligned}
\end{equation*}
Since $c_{1}, c_{3} \in \mathbb{F}\langle a \rangle$, so $c_{1} = \sum_{i=0}^{2n-1} \lambda_{i} a^{i}$ and $c_{3} = \sum_{i=0}^{2n-1} \mu_{i} a^{i}$ for some $\lambda_{i}, \mu_{i} \in \mathbb{F}$ ($0 \leq i \leq 2n-1$). Put $\lambda_{2n} = \lambda_{0}$ and $\mu_{2n} = \mu_{0}$. Therefore, $\tilde{f}(a^{2n}) = 0$ if and only if the following three equations hold. \begin{equation}\label{eq 3.1} 2na^{2n-1}(c_{0}+c_{2}b^{2}) = 0,\end{equation}
\begin{equation}\label{eq 3.2} \sum_{\substack{1 \leq i \leq 2n \\ \text{$i$ odd}}} a^{2i-1} \left(\sum_{j=0}^{2n-1} \lambda_{j} a^{j}\right)  b^{3} + \sum_{\substack{1 \leq i \leq 2n \\ \text{$i$ even}}} a^{2i-1} \left(\sum_{j=0}^{2n-1} \mu_{j} a^{j}\right) b^{3} = 0,\end{equation}

\begin{equation}\label{eq 3.3} \sum_{\substack{1 \leq i \leq 2n \\ \text{$i$ even}}} a^{2i-1} \left(\sum_{j=0}^{2n-1} \lambda_{j} a^{j}\right) b + \sum_{\substack{1 \leq i \leq 2n \\ \text{$i$ odd}}} a^{2i-1} \left(\sum_{j=0}^{2n-1} \mu_{j} a^{j}\right) b = 0.\end{equation}
Now, there are two possibilities: (i) and (ii).

(i) $\ch(\mathbb{F}) = 0$ or $p$ with $\gcd(n,p)=1$: Then, from (\ref{eq 3.1}), $c_{0} = 0$ and $c_{2} = 0$. Therefore, $f(a) = c_{1}b + c_{3}b^{3}$. Post multiplying (\ref{eq 3.2}) by $ba$ and then comparing, we will get \begin{equation}\label{eq 3.4} \left(\sum_{\substack{0 \leq j \leq 2n-1 \\ \text{$j$ odd}}} \lambda_{j} a^{j}\right) \left(\sum_{\substack{1 \leq i \leq 2n \\ \text{$i$ odd}}} a^{2i}\right) + \left(\sum_{\substack{0 \leq j \leq 2n-1 \\ \text{$j$ odd}}} \mu_{j} a^{j}\right) \left(\sum_{\substack{1 \leq i \leq 2n \\ \text{$i$ even}}} a^{2i}\right) = 0\end{equation} and \begin{equation}\label{eq 3.5} \left(\sum_{\substack{0 \leq j \leq 2n-1 \\ \text{$j$ even}}} \lambda_{j} a^{j} \right)\left(\sum_{\substack{1 \leq i \leq 2n \\ \text{$i$ odd}}} a^{2i}\right) + \left(\sum_{\substack{0 \leq j \leq 2n-1 \\ \text{$j$ even}}} \mu_{j} a^{j} \right)\left(\sum_{\substack{1 \leq i \leq 2n \\ \text{$i$ even}}} a^{2i}\right) = 0.\end{equation}
We will get similar equations from (\ref{eq 3.3}) by just interchanging the roles of $\lambda_{i}$'s and $\mu_{i}$'s. Again, we have two possible cases: $n$ odd and $n$ even.

Case 1: $n$ is odd. Since $\sum_{\substack{1 \leq i \leq 2n \\ \text{$i$ odd}}} a^{2i} = \sum_{\substack{1 \leq i \leq 2n \\ \text{$i$ even}}} a^{2i} = \sum_{k=1}^{n} a^{2k}$ and $\sum_{k=1}^{n} a^{2k+j} = \sum_{\substack{1 \leq l \leq 2n \\ \text{$l$ odd}}} a^{l}$ for each odd $j$ in $\{0, 1, ..., 2n-1\}$, so by (\ref{eq 3.4}), $(\sum_{\substack{0 \leq j \leq 2n-1 \\ \text{$j$ odd}}} (\lambda_{j} + \mu_{j})) (\sum_{\substack{1 \leq l \leq 2n \\ \text{$l$ odd}}} a^{l}) = 0$. Therefore, \begin{equation}\label{eq 3.6} \sum_{\substack{0 \leq j \leq 2n-1 \\ \text{$j$ odd}}} (\lambda_{j} + \mu_{j}) = 0.\end{equation}
Further since $\sum_{k=1}^{n} a^{2k+j} = \sum_{\substack{1 \leq l \leq 2n \\ \text{$l$ even}}} a^{l}$ for each even $j$ in $\{0, 1, ..., 2n-1\}$, so by (\ref{eq 3.5}), $\left(\sum_{\substack{0 \leq j \leq 2n-1 \\ \text{$j$ even}}} (\lambda_{j} + \mu_{j}) \right) \left(\sum_{\substack{1 \leq l \leq 2n \\ \text{$l$ even}}} a^{l}\right) = 0$. Therefore,

\begin{equation}\label{eq 3.7} \sum_{\substack{0 \leq j \leq 2n-1 \\ \text{$j$ even}}} (\lambda_{j} + \mu_{j}) = 0.\end{equation}
(\ref{eq 3.6}) and (\ref{eq 3.7}) together imply that $c_{1} + c_{3} \in \Delta '(\langle a^{2} \rangle)$. The same outcome is produced from (\ref{eq 3.3}).

Case 2: $n$ is even. $\sum_{\substack{1 \leq i \leq 2n \\ \text{$i$ odd}}} a^{2i} = 2 \left(\sum_{\substack{1 \leq i \leq n \\ \text{$i$ odd}}} a^{2i}\right)$ and $\sum_{\substack{1 \leq i \leq 2n \\ \text{$i$ even}}} a^{2i} = 2 \left(\sum_{\substack{1 \leq i \leq n \\ \text{$i$ even}}} a^{2i}\right)$. Using this in (\ref{eq 3.4}), we can easily determine that

\begin{equation*}
\begin{aligned}
(\sum_{s=1}^{\frac{n}{2}} \lambda_{4s-3} + \sum_{t=1}^{\frac{n}{2}} \mu_{4t-1}) (\sum_{i=1}^{\frac{n}{2}} a^{4i-1}) + (\sum_{t=1}^{\frac{n}{2}} \lambda_{4t-1} + \sum_{s=1}^{\frac{n}{2}} \mu_{4s-3}) (\sum_{i=1}^{\frac{n}{2}} a^{4i-3}) = 0.
\end{aligned}
\end{equation*}

\noindent Therefore, \begin{equation}\label{eq 3.8}
\sum_{s=1}^{\frac{n}{2}} \lambda_{4s-3} + \sum_{t=1}^{\frac{n}{2}} \mu_{4t-1} = 0
\end{equation} and \begin{equation}\label{eq 3.9}
\sum_{t=1}^{\frac{n}{2}} \lambda_{4t-1} + \sum_{s=1}^{\frac{n}{2}} \mu_{4s-3} = 0
\end{equation}

Similarly, from (\ref{eq 3.5}), we will get 
\begin{equation*}
\begin{aligned}
(\sum_{s=1}^{\frac{n}{2}} \lambda_{4s-2} + \sum_{t=1}^{\frac{n}{2}} \mu_{4t}) (\sum_{i=1}^{\frac{n}{2}} a^{4i}) + (\sum_{s=1}^{\frac{n}{2}} \mu_{4s-2} + \sum_{t=1}^{\frac{n}{2}} \lambda_{4t}) (\sum_{i=1}^{\frac{n}{2}} a^{4i-2}) = 0. 
\end{aligned}
\end{equation*}

\noindent Therefore, \begin{equation}\label{eq 3.10}
\sum_{s=1}^{\frac{n}{2}} \lambda_{4s-2} + \sum_{t=1}^{\frac{n}{2}} \mu_{4t} = 0
\end{equation} and \begin{equation}\label{eq 3.11}
\sum_{t=1}^{\frac{n}{2}} \lambda_{4t} + \sum_{s=1}^{\frac{n}{2}} \mu_{4s-2} = 0
\end{equation}
We will get the same four equations when we work with (\ref{eq 3.3}). 

Now by (\ref{eq 2.1}), $$\tilde{f}(b^{4}) = \tilde{f}(b^{2})\phi(b^{2}) + \phi(b^{2})\tilde{f}(b^{2}) = \tilde{f}(b^{2})b^{2} + b^{2}\tilde{f}(b^{2}) =  2 b^{2} \tilde{f}(b^{2})$$ so that $\tilde{f}(b^{4}) = 0 \Leftrightarrow \tilde{f}(b^{2})=0$. 
Again, $$\tilde{f}(b^{2}) = \tilde{f}(b)\phi(b) + \phi(b) \tilde{f}(b) = f(b) b + b f(b)$$ so that $\tilde{f}(b^{2}) = 0 \Leftrightarrow f(b)b = -bf(b) \Leftrightarrow f(b) \in \bar{C}(b)$. Therefore, $\tilde{f}(b^{4}) = 0$ if and only if $f(b) \in \bar{C}(b)$.
Further, $$\tilde{f}((ba)^{2}) = \tilde{f}(ba) \phi(ba) + \phi(ba) \tilde{f}(ba) = \tilde{f}(ba) ba + ba \tilde{f}(ba)$$ so that $\tilde{f}((ba)^{2}) = 0 \Leftrightarrow \tilde{f}(ba) \in \bar{C}(ba)$. 
Similarly, $\tilde{f}((b^{-1}a)^{2}) = 0 \Leftrightarrow \tilde{f}(b^{-1}a) \in \bar{C}(b^{-1}a)$.
Now, $$\tilde{f}(ba) = \tilde{f}(b)\phi(a) + \phi(b)\tilde{f}(a) = f(b) a + b f(a)$$ so that $f(a) = b^{-1} \tilde{f}(ba) - b^{-1} f(b)a$. 
Also using (\ref{eq 2.2}) in addition to (\ref{eq 2.1}), we get $$\tilde{f}(b^{-1}a) = f(b^{-1}) \phi(a) + \phi(b^{-1}) f(a) = -b^{-1}f(b)b^{-1} a + b^{-1} f(a)$$ so that $f(a) = b \tilde{f}(b^{-1}a) + f(b)b^{-1}a$. Therefore, $$f(a) = b^{-1} \tilde{f}(ba) - b^{-1} f(b)a = b \tilde{f}(b^{-1}a) + f(b)b^{-1}a.$$

\begin{center}
$\Rightarrow b^{-1} \tilde{f}(ba) - b \tilde{f}(b^{-1}a) = \left(f(b)b^{-1} + b^{-1}f(b)\right) a$.
\end{center}

Now since $f(b) \in \bar{C}(b)$ and by \th\ref{lemma 2.8} (ii), $\bar{C}(b) = \bar{C}(b^{-1})$, therefore, $f(b) \in \bar{C}(b^{-1})$. This gives $f(b)b^{-1} + b^{-1}f(b) = 0$. Therefore, from above, we get that $\tilde{f}(b^{-1}a) = \tilde{f}(ba) b^{2}$. By \th\ref{lemma 2.8} (iv), $\bar{C}(ba) = \bar{C}(b^{-1}a)$.
Therefore, the conditions $\tilde{f}(ba) \in \bar{C}(ba)$ and $\tilde{f}(b^{-1}a) \in \bar{C}(b^{-1}a)$ are equivalent, provided the condition $f(b) \in \bar{C}(b)$ holds. 

Since $f(b) \in \bar{C}(b)$ and by \th\ref{lemma 2.8} (i), $\bar{\mathcal{B}}(b)$ is a basis of $\bar{C}(b)$, therefore, 

\begin{equation*}
\begin{aligned}
f(b) & = \sum_{k=1}^{\lfloor \frac{n-1}{2} \rfloor} \lambda_{k,0} (a^{2k} - a^{-2k}) + \sum_{k=1}^{\lfloor \frac{n-1}{2} \rfloor} \lambda_{k,1} (a^{2k} - a^{-2k}) b + \sum_{k=1}^{\lfloor \frac{n-1}{2} \rfloor} \lambda_{k,2} (a^{2k} - a^{-2k}) b^{2} \\ &\quad+ \sum_{k=1}^{\lfloor \frac{n-1}{2} \rfloor} \lambda_{k,3} (a^{2k} - a^{-2k}) b^{3} + \sum_{k=1}^{\lfloor \frac{n+1}{2} \rfloor} \mu_{k,0} (a^{2k-1} - a^{-(2k-1)}b^{2}) \\ &\quad + \sum_{k=1}^{\lfloor \frac{n+1}{2} \rfloor} \mu_{k,1} (a^{2k-1} - a^{-(2k-1)}b^{2})b + \sum_{k=1}^{\lceil \frac{n-1}{2} \rceil} \delta_{k,0} (a^{-(2k-1)} - a^{2k-1}b^{2}) \\ &\quad + \sum_{k=1}^{\lceil \frac{n-1}{2} \rceil} \delta_{k,1} (a^{-(2k-1)} - a^{2k-1}b^{2})b
\end{aligned}
\end{equation*} 
for some $\lambda_{k,j}$ ($1 \leq k \leq \lfloor \frac{n-1}{2} \rfloor$, $0 \leq j \leq 3$), $\mu_{k,0}, \mu_{k,1}$ ($1 \leq k \leq \lfloor \frac{n+1}{2} \rfloor$), $\delta_{k,0}, \delta_{k,1}$ ($1 \leq k \leq \lceil \frac{n-1}{2} \rceil$) in $\mathbb{F}$.
Further since $\tilde{f}(ba) \in \bar{C}(ba)$ and by \th\ref{lemma 2.8} (iii), $\bar{\mathcal{B}}(ba)$ is a basis of $\bar{C}(ba)$, therefore, \begin{equation*}
\begin{aligned}\tilde{f}(ba) & = \sum_{i=1}^{\lfloor \frac{n-1}{2} \rfloor} c_{k,0} (a^{2k} - a^{-2k}) + \sum_{i=1}^{\lfloor \frac{n-1}{2} \rfloor} c_{k,1} (a^{2k} - a^{-2k})b^{2} + \sum_{i=1}^{\lfloor \frac{n-1}{2} \rfloor} c_{k,2} (a^{2k-1} - a^{-(2k+1)})b \\ &\quad+ \sum_{i=1}^{\lfloor \frac{n-1}{2} \rfloor} c_{k,3} (a^{2k-1} - a^{-(2k+1)})b^{3} + \sum_{k=1}^{n} d_{k,0} (a^{2k-1} - a^{-(2k-1)}b^{2}) \\ &\quad + \sum_{k=1}^{n} d_{k,1} (a^{2k-2} - a^{-2k}b^{2})b\end{aligned}
\end{equation*} for some $c_{k,j}$ ($1 \leq k \leq \lfloor \frac{n-1}{2} \rfloor$, $0 \leq j \leq 3$), $d_{k,0}, d_{k,1}$ ($1 \leq k \leq n$) in $\mathbb{F}$.

As $f(a) = b^{-1}f(ba) - b^{-1}f(b)a$, $f(b) \in \bar{C}(b)$ and $b^{-1}a = a^{-1}b$, we get $f(a) = b^{-1}f(ba) + f(b)a^{-1}b$. So on comparing, $c_{1}b + c_{3}b^{3} = f(a) = b^{-1}f(ba) + f(b)a^{-1}b$, we will get the following four equations:
\begin{equation}\label{eq 3.12}
\begin{aligned}
& \sum_{k=1}^{\lfloor \frac{n-1}{2} \rfloor} c_{k,3} (a^{-(2k-1)} - a^{2k+1}) + \sum_{k=1}^{n} d_{k,1} a^{-(2k-2)} + \sum_{k=1}^{\lfloor \frac{n-1}{2} \rfloor} \lambda_{k,1} (a^{2k+1} - a^{-(2k-1)}) + \sum_{k=1}^{\lfloor \frac{n+1}{2} \rfloor} \mu_{k,1} a^{2k} \\ &\quad + \sum_{k=1}^{\lceil \frac{n-1}{2} \rceil} \delta_{k,1} a^{-(2k-2)} = 0;
\end{aligned}
\end{equation}

\begin{equation}\label{eq 3.13}
\begin{aligned}
c_{1} & = \sum_{k=1}^{\lfloor \frac{n-1}{2} \rfloor} c_{k,1} (a^{-2k} - a^{2k}) + \sum_{k=1}^{n} d_{k,0} a^{-(2k-1)} + \sum_{k=1}^{\lfloor \frac{n-1}{2} \rfloor} \lambda_{k,0} (a^{2k-1} - a^{-(2k+1)}) + \sum_{k=1}^{\lfloor \frac{n+1}{2} \rfloor} \mu_{k,0} a^{2k-2} \\ &\quad + \sum_{k=1}^{\lceil \frac{n-1}{2} \rceil} \delta_{k,0} a^{-2k};
\end{aligned}
\end{equation}

\begin{equation}\label{eq 3.14}
\begin{aligned}
& \sum_{k=1}^{\lfloor \frac{n-1}{2} \rfloor} c_{k,2} (a^{-(2k-1)} - a^{2k+1}) - \sum_{k=1}^{n} d_{k,1} a^{2k} + \sum_{k=1}^{\lfloor \frac{n-1}{2} \rfloor} \lambda_{k,3} (a^{2k+1} - a^{-(2k-1)}) - \sum_{k=1}^{\lfloor \frac{n+1}{2} \rfloor} \mu_{k,1} a^{-(2k-2)} \\ &\quad - \sum_{k=1}^{\lceil \frac{n-1}{2} \rceil} \delta_{k,1} a^{2k} = 0;
\end{aligned}
\end{equation}

\begin{equation}\label{eq 3.15}
\begin{aligned}
c_{3} & = \sum_{k=1}^{\lfloor \frac{n-1}{2} \rfloor} c_{k,0} (a^{-2k} - a^{2k}) -  \sum_{k=1}^{n} d_{k,0} a^{2k-1} + \sum_{k=1}^{\lfloor \frac{n-1}{2} \rfloor} \lambda_{k,2} (a^{2k-1} - a^{-(2k+1)}) - \sum_{k=1}^{\lfloor \frac{n+1}{2} \rfloor} \mu_{k,0} a^{-2k} \\ &\quad - \sum_{k=1}^{\lceil \frac{n-1}{2} \rceil} \delta_{k,0} a^{2k-2}.
\end{aligned}
\end{equation}

Now, we have two possible cases.

Case 1: $n$ is odd. Then $\lceil \frac{n-1}{2} \rceil = \frac{n-1}{2} = \lceil \frac{n-1}{2} \rceil$ and $\lfloor \frac{n+1}{2} \rfloor = \frac{n+1}{2}$. From (\ref{eq 3.12}), we get that 
\begin{equation*}
\begin{aligned}
& \sum_{k=1}^{\frac{n-1}{2}} (c_{k,3} - \lambda_{k,1})a^{-(2k-1)} + \sum_{k=1}^{\frac{n-1}{2}} (\lambda_{k,1} - c_{k,3})a^{2k+1} \\ & + \sum_{k=1}^{\frac{n-1}{2}} (d_{k,1} + \delta_{k,1}) a^{-(2k-2)} + \sum_{k=1}^{\frac{n+1}{2}} (d_{(n-k+1),1} + \mu_{k,1}) a^{2k} = 0.
\end{aligned}
\end{equation*} Therefore, $\lambda_{k,1} = c_{k,3}$ ($1 \leq k \leq \frac{n-1}{2}$), $\delta_{k,1} = - d_{k,1}$ ($1 \leq k \leq \frac{n-1}{2}$) and $\mu_{k,1} = - d_{(n-k+1),1}$ ($1 \leq k \leq \frac{n+1}{2}$).
From (\ref{eq 3.14}), we get an additional condition that $\lambda_{k,3} = c_{k,2}$ ($1 \leq k \leq \frac{n-1}{2}$). Also observe that for the above determined values of $c_{1}$ in (\ref{eq 3.13}) and $c_{3}$ in (\ref{eq 3.15}), we have that $c_{1} + c_{3} \in \Delta ' (\langle a^{2} \rangle)$. So
\begin{equation*}
\begin{aligned}
f(a) & = c_{1}b + c_{3}b^{3} = \sum_{k=1}^{\frac{n-1}{2}} c_{k,1} (a^{-2k} - a^{2k})b + \sum_{k=1}^{n} d_{k,0} (a^{-(2k-1)} - a^{2k-1}b^{2})b \\ &\quad + \sum_{k=1}^{\frac{n-1}{2}} \lambda_{k,0} (a^{2k-1} - a^{-(2k+1)})b + \sum_{k=1}^{\frac{n+1}{2}} \mu_{k,0} (a^{2k-2} - a^{-2k}b^{2})b + \sum_{k=1}^{\frac{n-1}{2}} \delta_{k,0} (a^{-2k} - a^{2k-2}b^{2})b \\ &\quad + \sum_{k=1}^{\frac{n-1}{2}} c_{k,0} (a^{-2k} - a^{2k})b^{3} + \sum_{i=1}^{\frac{n-1}{2}} \lambda_{k,2} (a^{2k-1} - a^{-(2k+1)})b^{3}.
\end{aligned}
\end{equation*} Now put \begin{equation*}
\begin{aligned}
\mathcal{B}_{o} & = \{d_{(\bar{a}, \bar{b})} \mid (\bar{a}, \bar{b}) \in B_{o1} \cup B_{o2} \cup B_{o3}\},
\end{aligned}
\end{equation*} where $\bar{a} = d(a) = f(a)$, $\bar{b} = d(b) = f(b)$ and \begin{equation*}
\begin{aligned}
B_{o1} & = \{((a^{-2k} - a^{2k})b,0),  ~~    ((a^{2k-1} - a^{-(2k+1)})b, (a^{2k} - a^{-2k})),   ~~   ((a^{-2k} - a^{2k})b^{3},0),   \\ &\quad  ((a^{2k-1} - a^{-(2k+1)})b^{3},  (a^{2k} - a^{-2k}) b^{2}),   ~~    (0, (a^{2k} - a^{-2k}) b),  ~~   (0,(a^{2k} - a^{-2k}) b^{3}),  \\ &\quad  ((a^{-2k} - a^{2k-2}b^{2})b, (a^{-(2k-1)} - a^{2k-1}b^{2})),  ~~  (0,(a^{-(2k-1)} - a^{2k-1}b^{2})b)    \mid 1 \leq k \leq \frac{n-1}{2}\},
\\ B_{o2} & = \{((a^{2k-2}-a^{-2k}b^{2})b, (a^{2k-1} - a^{-(2k-1)}b^{2})),  ~  (0, (a^{2k-1} - a^{-(2k-1)}b^{2})b) \\ &\quad \mid 1 \leq k \leq \frac{n+1}{2}\},
\\ B_{o3} & = \{((a^{-(2k-1)} - a^{2k-1}b^{2})b,0) \mid 1 \leq k \leq n\}.
\end{aligned}
\end{equation*}

Note that $|B_{o1}| = 4(n-1)$, $|B_{o2}| = n+1$ and $|B_{o3}| = n$. We now prove that $\mathcal{B}_{o}$ is a basis of $\der_{\mathbb{F}}(\mathbb{F}V_{8n})$ over $\mathbb{F}$. If $c_{1}, c_{2} \in \mathbb{F}$ and $\alpha_{1}, \alpha_{2}, \beta_{1}, \beta_{2} \in \mathbb{F}V_{8n}$, then 
\begin{eqnarray*}\left(c_{1}d_{(\alpha_{1}, \beta_{1})} + c_{2}d_{(\alpha_{2}, \beta_{2})}\right)(a) & = & c_{1}d_{(\alpha_{1}, \beta_{1})}(a) + c_{2}d_{(\alpha_{2}, \beta_{2})}(a) = c_{1}\alpha_{1} + c_{2}\alpha_{2} \\ & = & d_{(c_{1}\alpha_{1} + c_{2} \alpha_{2}, c_{1}\beta_{1} + c_{2} \beta_{2})}(a).\end{eqnarray*}
Similarly, $d_{(c_{1}\alpha_{1} + c_{2} \alpha_{2}, c_{1}\beta_{1} + c_{2} \beta_{2})}(b) = \left(c_{1}d_{(\alpha_{1}, \beta_{1})} + c_{2}d_{(\alpha_{2}, \beta_{2})}\right)(b)$. So by using the fact that any $\mathbb{F}$-derivation of $\mathbb{F}V_{8n}$ is completely determined by its image values $d(a)$ and $d(b)$, we get from above that $c_{1}d_{(\alpha_{1}, \beta_{1})} + c_{2}d_{(\alpha_{2}, \beta_{2})} = d_{(c_{1}\alpha_{1} + c_{2} \alpha_{2}, c_{1}\beta_{1} + c_{2} \beta_{2})}$. Therefore, $\mathcal{B}_{o}$ spans $\der_{\mathbb{F}}(\mathbb{F}V_{8n})$ over $\mathbb{F}$. Now let $r_{i}, s_{j}, t_{k} \in \mathbb{F}$ ($1 \leq i \leq 4(n-1)$, $1 \leq j \leq n+1$, $1 \leq k \leq n$) such that $$\sum_{(\alpha_{i}, \beta_{i}) \in B_{o1}} r_{i}d_{(\alpha_{i}, \beta_{i})} + \sum_{(\gamma_{j}, \delta_{j}) \in B_{o2}} s_{j}d_{(\gamma_{j}, \delta_{j})} +\sum_{(\zeta_{k}, \eta_{k}) \in B_{o3}} t_{k}d_{(\zeta_{k}, \eta_{k})} = 0,$$ where $0$ on the right denotes the zero derivation. Therefore,
\begin{equation*}
\begin{aligned}
& d_{\left(\sum_{(\alpha_{i}, \beta_{i}) \in B_{o1}} r_{i}\alpha_{i} + \sum_{(\gamma_{j}, \delta_{j}) \in B_{o2}} s_{j}\gamma_{j} + \sum_{(\zeta_{k}, \eta_{k}) \in B_{o3}} t_{k}\zeta_{k}, ~ \sum_{(\alpha_{i}, \beta_{i}) \in B_{o1}} r_{i}\beta_{i} + \sum_{(\gamma_{j}, \delta_{j}) \in B_{o2}} s_{j}\delta_{j} + \sum_{(\zeta_{k}, \eta_{k}) \in B_{o3}} t_{k}\eta_{k}\right)} \\ &\quad = 0.
\end{aligned}
\end{equation*}
\noindent This implies that the function on the left takes zero value at all points in the domain $\mathbb{F}V_{8n}$, in particular, at $a$ and $b$. Therefore,
$$\sum_{(\alpha_{i}, \beta_{i}) \in B_{o1}} r_{i}\alpha_{i} + \sum_{(\gamma_{j}, \delta_{j}) \in B_{o2}} s_{j}\gamma_{j} + \sum_{(\zeta_{k}, \eta_{k}) \in B_{o3}} t_{k}\zeta_{k} = 0$$ and $$\sum_{(\alpha_{i}, \beta_{i}) \in B_{o1}} r_{i}\beta_{i} + \sum_{(\gamma_{j}, \delta_{j}) \in B_{o2}} s_{j}\delta_{j} + \sum_{(\zeta_{k}, \eta_{k}) \in B_{o3}} t_{k}\eta_{k} = 0.$$
But then the $\mathbb{F}$-linear independence of the sets $\{\alpha_{i}, \gamma_{j}, \zeta_{k} \mid (\alpha_{i}, \beta_{i}) \in B_{o1}, (\gamma_{j}, \delta_{j}) \in B_{o2}, \\ (\zeta_{k}, \eta_{k}) \in B_{o3}\}$ and $\{\beta_{i}, \delta_{j}, \eta_{k} \mid (\alpha_{i}, \beta_{i}) \in B_{o1}, (\gamma_{j}, \delta_{j}) \in B_{o2}, (\zeta_{k}, \eta_{k}) \in B_{o3}\}$ implies that $r_{i} = 0$ ($1 \leq i \leq 4(n-1)$), $s_{j} = 0$ ($1 \leq j \leq n+1$) and $t_{k} = 0$ ($1 \leq k \leq n$). Therefore, $\mathcal{B}_{o}$ is an $\mathbb{F}$-linearly independent subset of $\der_{\mathbb{F}}(\mathbb{F}V_{8n})$. Therefore, $\mathcal{B}_{o}$ is a basis of $\der_{\mathbb{F}}(\mathbb{F}V_{8n})$ over $\mathbb{F}$ when $n$ is odd.
Further, since $|\mathcal{B}_{o}| = 3(2n-1)$, therefore, the dimension of $\der_{\mathbb{F}}(\mathbb{F}V_{8n})$ over $\mathbb{F}$ is $3(2n-1)$ when $n$ is odd.

Case 2: $n$ is even. Then $\lfloor \frac{n-1}{2} \rfloor = \frac{n}{2}-1$ and $\lceil \frac{n-1}{2} \rceil = \frac{n}{2}$. As in Case 1, we will get from (\ref{eq 3.12}) and (\ref{eq 3.14}) that $\lambda_{k,1} = c_{k,3}$ ($1 \leq k \leq \frac{n}{2}-1$), $\delta_{k,1} = -d_{k,1}$ ($1 \leq k \leq \frac{n}{2}$), $\mu_{k,1} = -d_{(n-k+1),1}$ ($1 \leq k \leq \frac{n}{2}$) and $\lambda_{k,3} = c_{k,2}$ ($1 \leq k \leq \frac{n}{2}-1$).
The relations (\ref{eq 3.8}), (\ref{eq 3.9}), (\ref{eq 3.10}), and (\ref{eq 3.11}) are also satisfied by the expressions of $c_{1}$ and $c_{3}$ determined in (\ref{eq 3.13}) and (\ref{eq 3.15}) respectively. So

\begin{equation*}
\begin{aligned}
f(a) & = c_{1}b + c_{3}b^{3} = \sum_{k=1}^{\frac{n}{2}-1} c_{k,1} (a^{-2k} - a^{2k})b + \sum_{k=1}^{n} d_{k,0} (a^{-(2k-1)} - a^{2k-1}b^{2})b \\ &\quad + \sum_{k=1}^{\frac{n}{2}-1} \lambda_{k,0} (a^{2k-1} - a^{-(2k+1)})b + \sum_{k=1}^{\frac{n}{2}} \mu_{k,0} (a^{2k-2}-a^{-2k}b^{2})b + \sum_{k=1}^{\frac{n}{2}} \delta_{k,0} (a^{-2k} - a^{2k-2}b^{2})b \\ &\quad + \sum_{k=1}^{\frac{n}{2}-1} c_{k,0} (a^{-2k} - a^{2k})b^{3} + \sum_{k=1}^{\frac{n}{2}-1} \lambda_{k,2} (a^{2k-1} - a^{-(2k+1)})b^{3}.
\end{aligned}
\end{equation*} Then, as shown in Case 1, it can be shown that the set \begin{equation*}
\begin{aligned}\mathcal{B}_{e} & = \{d_{(\bar{a}, \bar{b})} \mid (\bar{a}, \bar{b}) \in B_{e1} \cup B_{e2} \cup B_{e3}\}, \end{aligned}
\end{equation*} where 
\begin{equation*}
\begin{aligned}
B_{e1} & = \{((a^{-2k} - a^{2k})b,0),  ~~    ((a^{2k-1} - a^{-(2k+1)})b, (a^{2k} - a^{-2k})),   ~~   ((a^{-2k} - a^{2k})b^{3},0),   \\ &\quad  ((a^{2k-1} - a^{-(2k+1)})b^{3},  (a^{2k} - a^{-2k}) b^{2}),   ~~    (0, (a^{2k} - a^{-2k}) b),  ~~  (0,(a^{2k} - a^{-2k}) b^{3}) \\ &\quad \mid 1 \leq k \leq \frac{n}{2}-1\},
\\ B_{e2} & = \{((a^{2k-2}-a^{-2k}b^{2})b, (a^{2k-1} - a^{-(2k-1)}b^{2})),  ~~   ((a^{-2k} - a^{2k-2}b^{2})b, (a^{-(2k-1)} - a^{2k-1}b^{2})),  \\ &\quad   (0,(a^{2k-1} - a^{-(2k-1)}b^{2})b), ~~ (0, (a^{-(2k-1)} - a^{2k-1}b^{2})b)  \mid 1 \leq k \leq \frac{n}{2}\},  
\\ B_{e3} & = \{((a^{-(2k-1)} - a^{2k-1}b^{2})b,0) \mid 1 \leq k \leq n\},
\end{aligned}
\end{equation*} where $\bar{a} = d(a) = f(a)$ and $\bar{b} = d(b) = f(b)$, is a basis of $\der_{\mathbb{F}}(\mathbb{F}V_{8n})$ over $\mathbb{F}$. Since $|\mathcal{B}_{e}| = 6(n-1)$, therefore, the dimension of $\der_{\mathbb{F}}(\mathbb{F}V_{8n})$ over $\mathbb{F}$ is $6(n-1)$ when $n$ is even. 
Observe that $\mathcal{B} = \begin{cases}
\mathcal{B}_{o} & \text{if $n$ is odd} \\
\mathcal{B}_{e} & \text{if $n$ is even}
\end{cases}$. So, part (i) of the theorem is proved.\vspace{10pt}

(ii) $\ch(\mathbb{F}) \neq 0$, that is, $\ch(\mathbb{F}) = p$. Then the equation (\ref{eq 3.1}) always holds, that is, to say $c_{0}$ and $c_{2}$ are not necessarily $0$. We can proceed as in part  (i) to show that the conditions (\ref{eq 3.2}) and (\ref{eq 3.3}) give $c_{1} + c_{3} \in \Delta ' (\langle a^{2} \rangle)$ when $n$ is odd and give equations (\ref{eq 3.8}), (\ref{eq 3.9}), (\ref{eq 3.10}) and (\ref{eq 3.11}) when $n$ is even. Proceeding likewise as in (i), we will get that $c_{0} + c_{1}b + c_{2}b^{2} + c_{3}b^{3} = b^{-1}f(ba) + f(b)a^{-1}b$. On solving and equating, we will get the following four equations:

\begin{equation*}
\begin{aligned}
c_{0} & = \sum_{k=1}^{\lfloor \frac{n-1}{2} \rfloor} c_{k,3} (a^{-(2k-1)} - a^{2k+1}) + \sum_{k=1}^{n} d_{k,1} a^{-(2k-2)} + \sum_{k=1}^{\lfloor \frac{n-1}{2} \rfloor} \lambda_{k,1} (a^{2k+1} - a^{-(2k-1)}) \\ &\quad + \sum_{k=1}^{\lfloor \frac{n+1}{2} \rfloor} \mu_{k,1} a^{2k} + \sum_{k=1}^{\lceil \frac{n-1}{2} \rceil} \delta_{k,1} a^{-(2k-2)};
\\ c_{1} & = \sum_{k=1}^{\lfloor \frac{n-1}{2} \rfloor} c_{k,1} (a^{-2k} - a^{2k}) + \sum_{k=1}^{n} d_{k,0} a^{-(2k-1)} + \sum_{k=1}^{\lfloor \frac{n-1}{2} \rfloor} \lambda_{k,0} (a^{2k-1} - a^{-(2k+1)}) + \sum_{k=1}^{\lfloor \frac{n+1}{2} \rfloor} \mu_{k,0} a^{2k-2} \\ &\quad + \sum_{k=1}^{\lceil \frac{n-1}{2} \rceil} \delta_{k,0} a^{-2k};
\\ c_{2} & = \sum_{k=1}^{\lfloor \frac{n-1}{2} \rfloor} c_{k,2} (a^{-(2k-1)} - a^{2k+1}) - \sum_{k=1}^{n} d_{k,1} a^{2k} + \sum_{k=1}^{\lfloor \frac{n-1}{2} \rfloor} \lambda_{k,3} (a^{2k+1} - a^{-(2k-1)}) \\ &\quad - \sum_{k=1}^{\lfloor \frac{n+1}{2} \rfloor} \mu_{k,1} a^{-(2k-2)} - \sum_{k=1}^{\lceil \frac{n-1}{2} \rceil} \delta_{k,1} a^{2k};
\\ c_{3} & = \sum_{k=1}^{\lfloor \frac{n-1}{2} \rfloor} c_{k,0} (a^{-2k} - a^{2k}) -  \sum_{k=1}^{n} d_{k,0} a^{2k-1} + \sum_{k=1}^{\lfloor \frac{n-1}{2} \rfloor} \lambda_{k,2} (a^{2k-1} - a^{-(2k+1)}) - \sum_{k=1}^{\lfloor \frac{n+1}{2} \rfloor} \mu_{k,0} a^{-2k} \\ &\quad - \sum_{k=1}^{\lceil \frac{n-1}{2} \rceil} \delta_{k,0} a^{2k-2}.
\end{aligned}
\end{equation*}
For the above computed values of $c_{1}$ and $c_{3}$, a careful observation gives that when $n$ is odd, $c_{1} + c_{3} \in \Delta ' (\langle a^{2} \rangle)$ and when $n$ is even, then equations (\ref{eq 3.8}), (\ref{eq 3.9}), (\ref{eq 3.10}) and (\ref{eq 3.11}) hold. So $f(a) = c_{0} + c_{1}b + c_{2}b^{2} + c_{3}b^{3}$, where $c_{0}, c_{1}, c_{2}, c_{3}$ are as determined above. Then, as done in part (i), it can be very easily seen that the set $\mathcal{B}'$ given in part (ii) of the statement of the theorem is a basis of $\der_{\mathbb{F}}(\mathbb{F}V_{8n})$ over $\mathbb{F}$. Also, $|\mathcal{B}| = 8 (\lfloor \frac{n-1}{2} \rfloor) + 2(\lfloor \frac{n+1}{2} \rfloor)   +    2 (\lceil \frac{n-1}{2} \rceil)   +   2n$. Therefore, the dimension of $\der_{\mathbb{F}}(\mathbb{F}V_{8n})$ over $\mathbb{F}$ is $4(2n-1)$ when $n$ is odd and $8(n-1)$ when $n$ is even. Hence, the theorem is proved.
\end{proof}

\begin{corollary}\th\label{corollary 3.2}
Let $\mathbb{F}$ be an algebraic extension of a prime field such that $\ch(\mathbb{F})$ is either $0$ or an odd rational prime relatively prime to $n$. Then the dimension of the derivation algebra $\der(\mathbb{F}V_{8n})$ over $\mathbb{F}$ is $6(n-1)$ if $n$ is even and $3(2n-1)$ if $n$ is odd, and a basis is $\mathcal{B}$ given in \th\ref{theorem 3.1} (i).
\end{corollary}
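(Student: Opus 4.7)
The plan is essentially a one-line reduction to the two theorems already established. The hypothesis that $\mathbb{F}$ is an algebraic extension of a prime field is exactly what is needed to invoke \th\ref{theorem 2.2}: applied to the $\mathbb{F}$-algebra $\mathcal{A} = \mathbb{F}V_{8n}$, it gives $d(\mathbb{F}) = \{0\}$ and $\mathbb{F}$-linearity for every $d \in \der(\mathbb{F}V_{8n})$. In particular, every derivation of $\mathbb{F}V_{8n}$ is automatically an $\mathbb{F}$-derivation, so
\[
\der(\mathbb{F}V_{8n}) = \der_{\mathbb{F}}(\mathbb{F}V_{8n}).
\]

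With this identification in hand, the remaining content of the corollary is just a restatement of \th\ref{theorem 3.1}(i). The additional hypothesis on $\ch(\mathbb{F})$ (namely, $0$ or an odd prime coprime to $n$) is precisely the hypothesis of case (i) of that theorem, so we may directly read off that the dimension of $\der_{\mathbb{F}}(\mathbb{F}V_{8n})$ over $\mathbb{F}$ is $3(2n-1)$ when $n$ is odd and $6(n-1)$ when $n$ is even, and that the set $\mathcal{B}$ constructed there is a basis.

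Combining the two displays yields the claimed dimension formula and basis for $\der(\mathbb{F}V_{8n})$, completing the proof. There is no genuine obstacle here; the work has already been done in \th\ref{theorem 2.2} (which removes the distinction between arbitrary derivations and $\mathbb{F}$-derivations) and in \th\ref{theorem 3.1}(i) (which provides the explicit classification in the relevant characteristic range).
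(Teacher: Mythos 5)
Your proposal is correct and is exactly the paper's argument: the paper's proof is the one-line citation ``Follows from Theorem \ref{theorem 2.2} and Theorem \ref{theorem 3.1}(i),'' and you have merely spelled out the same reduction, using Theorem \ref{theorem 2.2} to identify $\der(\mathbb{F}V_{8n})$ with $\der_{\mathbb{F}}(\mathbb{F}V_{8n})$ and then reading off the dimension and basis from Theorem \ref{theorem 3.1}(i). Nothing further is needed.
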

\begin{proof}
Follows from \th\ref{theorem 2.2} and \th\ref{theorem 3.1} (i).
\end{proof}

\begin{proposition}\th\label{proposition 3.3}
Let $\mathbb{F}$ be a field. Then $\der_{\inn}(\mathbb{F}V_{8n})$ has dimension $6(n-1)$ if $n$ is even and $3(2n-1)$ if $n$ is odd.
\begin{enumerate}
\item[(i)] When $n$ is even, a basis for $\der_{\inn}(\mathbb{F}V_{8n})$ over $\mathbb{F}$ is 
\begin{equation*}
\begin{aligned}
\mathcal{B}_{1} & = \{d_{g} \mid g \in \{\{a^{2k}, ~ a^{2k}b^{2}, ~ a^{4k}b^{-1}, ~ a^{4k+1}b^{-1}, ~ a^{4k+2}b^{-1}, ~ a^{4k+3}b^{-1} \mid 1 \leq k \leq \frac{n}{2}-1\} \\ &\quad \cup \{a^{2k-1} \mid 1 \leq k \leq n\}  \cup \{a^{4k}b, ~ a^{4k+1}b, ~ a^{4k+2}b, ~ a^{4k+3}b \mid 1 \leq k \leq \frac{n}{2}\}\}.
\end{aligned}
\end{equation*}

\item[(ii)] When $n$ is odd, a basis for $\der_{\inn}(\mathbb{F}V_{8n})$ over $\mathbb{F}$ is 
\begin{equation*}
\begin{aligned}
\mathcal{B}_{2} & = \{d_{g} \mid g \in \{a^{2k}, ~ a^{2k}b^{2} \mid 1 \leq k \leq \frac{n-1}{2}\} \cup \{a^{2k-1}, ~ a^{2k}b, ~ a^{2k+1}b \mid 1 \leq k \leq n\} \\ &\quad \cup \{a^{2k}b^{-1}, ~ a^{2k+1}b^{-1} \mid 1 \leq k \leq n-1\}\}.
\end{aligned}
\end{equation*}
\end{enumerate}
\end{proposition}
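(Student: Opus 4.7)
The plan is to obtain both the dimension and the basis as an immediate consequence of \th\ref{theorem 2.3} (the Manju--Sharma dimension/basis formula for inner derivations) together with the explicit conjugacy-class description in \th\ref{lemma 2.6}. Since $|V_{8n}| = 8n$, \th\ref{theorem 2.3} reduces the statement to computing the number $r$ of conjugacy classes of $V_{8n}$ and then exhibiting, in each non-singleton class, a representative whose complement in that class matches the inner-derivation basis written in the proposition.

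From \th\ref{lemma 2.6} one reads off, in a single line, $r = 2n+6$ when $n$ is even (the four singletons being $\{1\}, \{a^n\}, \{b^2\}, \{a^n b^2\}$) and $r = 2n+3$ when $n$ is odd (the two singletons being $\{1\}, \{b^2\}$). Substituting into $\dim_{\mathbb{F}}\der_{\inn}(\mathbb{F}V_{8n}) = |V_{8n}| - r$ gives $8n-(2n+6) = 6(n-1)$ in the even case and $8n - (2n+3) = 3(2n-1)$ in the odd case, which are the stated dimensions. So both dimension assertions are immediate.

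For the bases, \th\ref{theorem 2.3} tells us that once a representative $x_i$ is chosen in each non-singleton conjugacy class $C_i$, the set $\{d_g : g \in \bigcup_i (C_i\setminus\{x_i\})\}$ is an $\mathbb{F}$-basis of $\der_{\inn}(\mathbb{F}V_{8n})$. The bulk of the work is therefore the bookkeeping of choosing representatives whose complements reproduce $\mathcal{B}_1$ and $\mathcal{B}_2$ verbatim. Concretely, for the two-element classes $\{a^{2k},a^{-2k}\}$, $\{a^{2k}b^2,a^{-2k}b^2\}$, $\{a^{2k-1},a^{-(2k-1)}b^2\}$ I would take $a^{-2k}$, $a^{-2k}b^2$ and $a^{-(2k-1)}b^2$ as the representatives; the remaining elements contribute the $a^{2k}$, $a^{2k}b^2$ and $a^{2k-1}$ terms listed in both $\mathcal{B}_1$ and $\mathcal{B}_2$. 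For the four size-$n$ classes $b^{V_{8n}}, (ab)^{V_{8n}}, (b^3)^{V_{8n}}, (ab^3)^{V_{8n}}$ appearing when $n$ is even, one chooses one element from each (for instance, the unique element of the form $a^{4k_0}b^{-1}, a^{4k_0+1}b^{-1}, a^{4k_0+2}b^{-1}, a^{4k_0+3}b^{-1}$ respectively, with $k_0 = n/2$), and the remaining $n-1$ elements per class give precisely the four families in $\mathcal{B}_1$. For $n$ odd, the two size-$2n$ classes $b^{V_{8n}}$ and $(ab)^{V_{8n}}$ need representatives $a^{2n}b^{-1}=b^{-1}$ and $a^{2n+1}b^{-1}=ab^{-1}$ respectively, so that the $2n-1$ remaining elements in each are exactly $\{a^{2k}b,a^{2k}b^{-1}:1\le k\le n, \text{ omitting } k=n\text{ on the }b^{-1}\text{-side}\}$ and similarly for $(ab)^{V_{8n}}$, matching $\mathcal{B}_2$.

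The only thing that is non-automatic is this matching, and it is a straightforward verification once the representatives are chosen as above; a cardinality check ($|\mathcal{B}_1|=6(n-1)$, $|\mathcal{B}_2|=3(2n-1)$) confirms that no class has been over- or under-counted. The main (and only mild) obstacle is purely notational: aligning the index ranges in \th\ref{lemma 2.6} with the index ranges used to describe $\mathcal{B}_1$ and $\mathcal{B}_2$, which I would handle by writing each class together with its chosen representative in a short table and then reading off the complements.
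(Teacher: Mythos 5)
Your proposal is correct and is exactly the paper's argument: the paper's entire proof is ``Follows from \th\ref{theorem 2.3} and \th\ref{lemma 2.6},'' and you have simply spelled out the dimension count $8n-r$ and the choice of class representatives that the paper leaves implicit. (One cosmetic slip: in the even case the class $b^{V_{8n}}$ contains the elements $a^{4k+2}b^{-1}$, not $a^{4k}b^{-1}$, so your assignment of representative forms to the four size-$n$ classes is permuted, but this does not affect the argument.)
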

\begin{proof}
Follows from \th\ref{theorem 2.3} and \th\ref{lemma 2.6}.
\end{proof}

\begin{corollary}\th\label{corollary 3.4}
Let $\mathbb{F}$ be an algebraic extension of a prime field and $p$ be an odd rational prime.
\begin{enumerate}
\item[(i)] If $\ch(\mathbb{F})$ is either $0$ or $p$ with $\gcd(n,p)=1$, then all derivations of $\mathbb{F}V_{8n}$ are inner, that is,  $\der(\mathbb{F}V_{8n}) = \der_{\inn}(\mathbb{F}V_{8n})$. In other words, $\mathbb{F}V_{8n}$ has no non-zero outer derivations.

\item[(ii)] If $\ch(\mathbb{F}) = p$ with $\gcd(n,p) \neq 1$, then $\der_{\inn}(\mathbb{F}V_{8n}) \subsetneq \der(\mathbb{F}V_{8n})$. In other words, $\mathbb{F}V_{8n}$ has non-zero outer derivations.
\end{enumerate}
\end{corollary}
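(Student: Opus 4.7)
The plan is to prove both parts by a dimension-counting argument, using the tools already assembled in the paper. The key observation is that when $\mathbb{F}$ is an algebraic extension of a prime field, Theorem 2.2 gives $\der(\mathbb{F}V_{8n}) = \der_{\mathbb{F}}(\mathbb{F}V_{8n})$, so the dimension of the full derivation algebra has already been computed in Theorem 3.1. Since $\der_{\inn}(\mathbb{F}V_{8n})$ is always an $\mathbb{F}$-subspace of $\der(\mathbb{F}V_{8n})$, comparing dimensions settles whether the inclusion is strict.

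For part (i), I would note that under the hypothesis $\ch(\mathbb{F}) = 0$ or $\ch(\mathbb{F}) = p$ with $\gcd(n,p) = 1$, Theorem 3.1(i) gives $\dim_{\mathbb{F}} \der_{\mathbb{F}}(\mathbb{F}V_{8n}) = 3(2n-1)$ when $n$ is odd and $6(n-1)$ when $n$ is even. Proposition 3.3 gives exactly the same dimensions for $\der_{\inn}(\mathbb{F}V_{8n})$. Combined with Theorem 2.2 (which upgrades every derivation to an $\mathbb{F}$-derivation), the inclusion $\der_{\inn}(\mathbb{F}V_{8n}) \subseteq \der(\mathbb{F}V_{8n})$ becomes an equality of finite-dimensional $\mathbb{F}$-spaces of equal dimension, hence an equality of sets. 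This proves that $\mathbb{F}V_{8n}$ has no non-zero outer derivations in this case.

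For part (ii), the hypothesis forces $p \mid n$, so in particular $n \geq 3$ (if $n$ is odd) or $n \geq 2p \geq 6$ (if $n$ is even); in either case $n \geq 2$. Theorem 3.1(ii) then gives $\dim_{\mathbb{F}} \der_{\mathbb{F}}(\mathbb{F}V_{8n}) = 4(2n-1)$ or $8(n-1)$, whereas Proposition 3.3 still yields $3(2n-1)$ or $6(n-1)$ for $\dim_{\mathbb{F}} \der_{\inn}(\mathbb{F}V_{8n})$. Since $3(2n-1) < 4(2n-1)$ and $6(n-1) < 8(n-1)$ whenever $n \geq 2$, the inclusion $\der_{\inn}(\mathbb{F}V_{8n}) \subseteq \der(\mathbb{F}V_{8n}) = \der_{\mathbb{F}}(\mathbb{F}V_{8n})$ is strict, which exhibits the existence of non-zero outer derivations.

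Honestly, there is no substantive obstacle here: all the heavy lifting was done in Theorem 3.1 and Proposition 3.3, and the corollary is just the bookkeeping that compares the two dimension formulas via Theorem 2.2. The only point requiring a brief sanity check is verifying that the range of $n$ in case (ii) is large enough to make the inequalities strict, which is automatic from $p \mid n$ with $p$ an odd prime.
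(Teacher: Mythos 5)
Your proposal is correct and is essentially the paper's own argument: the paper likewise derives the corollary by combining Theorem 2.2 (via Corollary 3.2) with the dimension counts in Theorem 3.1 and Proposition 3.3. The only addition is your explicit check that $p\mid n$ forces the strict inequality in part (ii), which is a harmless (and correct) bit of extra care.
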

\begin{proof}
Follows from \th\ref{theorem 3.1}, \th\ref{corollary 3.2} and \th\ref{proposition 3.3}.
\end{proof}\vspace{12pt}

We conclude this section with a remark on the restriction of derivations to the simple components in the Wedderburn decomposition of $\mathbb{F}V_{8n}$, in case it is semisimple.
\begin{remark}
Consider the case when the group algebra $\mathbb{F} V_{8n}$ is semisimple, that is, by Maschke's Theorem, the case when $\ch(\mathbb{F})$ does not divide the order of $V_{8n}$. Therefore, $\mathbb{F}V_{8n}$ is semisimple if and only if either $\ch(\mathbb{F}) = 0$ or $p$ with $\gcd(n,p) = 1$. By \th\ref{theorem 3.1}(i) and \th\ref{proposition 3.3}, $\der_{\mathbb{F}}(\mathbb{F}V_{8n}) = \der_{\inn}(\mathbb{F}V_{8n})$. As a result, in its Wedderburn decomposition, an $\mathbb{F}$-derivation of $\mathbb{F}V_{8n}$ restricts to each simple component of $\mathbb{F}V_{8n}$. More precisely, let $d \in \der_{\mathbb{F}}(\mathbb{F}V_{8n})$. Then $d = d_{\beta}$ for some $\beta \in \mathbb {F}V_{8n}$. A simple component of $\mathbb{F}V_{8n}$ looks like $\mathbb{F}V_{8n} e$, where $e$ is a primitive central idempotent of $\mathbb{F}V_{8n}$. Then, for any $\alpha \in \mathbb{F}V_{8n}e$, $\alpha = \gamma e$ for some $\gamma \in \mathbb{F}V_{8n}$. So $d(\alpha) = d_{\beta}(\alpha) = \alpha \beta - \beta \alpha = (\gamma e) \beta - \beta (\gamma e) = (\gamma \beta - \beta \gamma) e = (d_{\beta}(\gamma))e = d(\gamma) e \in \mathbb{F}V_{8n} e$ as $d(\gamma) \in \mathbb{F}V_{8n}$. Therefore, $d(\mathbb{F}V_{8n} e) \subseteq \mathbb{F}V_{8n} e$. When $\mathbb{F}$ is an algebraic extension of a prime field, then by \th\ref{corollary 3.2}, $\der_{\mathbb{F}}(\mathbb{F}V_{8n}) = \der(\mathbb{F}V_{8n})$, that is, every $\mathbb{F}$-derivation of $\mathbb{F}V_{8n}$ is a derivation of $\mathbb{F}V_{8n}$. Therefore, from the above discussion, it follows that when $\mathbb{F}$ is an algebraic extension of a prime field, then any derivation of $\mathbb{F}V_{8n}$ restricts to each simple component of $\mathbb{F}V_{8n}$, that is, $d(\mathbb{F}V_{8n} e) \subseteq \mathbb{F}V_{8n} e$ for all $d \in \der(\mathbb{F}V_{8n})$.

We can prove that the result holds for any semisimple group algebra $\mathbb{F}G$ of a finite group $G$ over a field $\mathbb{F}$. In this case, by Maschke's Theorem, $\ch(\mathbb{F})$ does not divide the order of $G$. By the Wedderburn-Artin Theorem, $$\mathbb{F}G \cong \mathbb{F}G e_{1} \oplus ... \oplus \mathbb{F}G e_{r}$$ with the following conditions:
\begin{enumerate}
\item[(i)] The above isomorphism is an isomorphism of $\mathbb{F}$-algebras.
\item[(ii)] $\{e_{1}, ..., e_{r}\}$ is a unique collection of primitive central idempotents of $\mathbb{F}G$.
\item[(iii)] For each $i \in \{1, ..., r\}$, $\mathbb{F}G e_{i}$ is a simple ring, and $\mathbb{F}Ge_{1}, ..., \mathbb{F}Ge_{r}$ are called simple components of $\mathbb{F}G$.
\item[(iv)] For each $i \in \{1, ..., r\}$, $\mathbb{F}G e_{i} \cong M_{n_{i}}(D_{i})$, the full matrix ring over $D_{i}$, where $D_{i}$ is a division algebra containing an isomorphic copy of $\mathbb{F}$ in its center.
\end{enumerate}

Let $d \in \der(\mathbb{F}G)$ and $i, j \in \{1, ..., r\}$ with $j \neq i$. Then $d(e_{i}) = d(e_{i}^{2}) = d(e_{i})e_{i} + e_{i}d(e_{i})$. Multiplying both sides by $e_{j}$ and using the fact that the idempotents $e_{i}, e_{j}$ are central with $e_{i}e_{j} = 0$, we get that $d(e_{i})e_{j} = d(e_{i})(e_{i}e_{j} + e_{i}e_{j}) = 0$. Therefore, for each $j \in \{1, ..., r\}$ with $j \neq i$, $d(e_{i})e_{j} = 0$, that is, for each $j \in \{1, ..., r\}$, $d(e_{i})$ is orthogonal to $e_{j}$. Therefore, $d(e_{i}) \in \mathbb{F}Ge_{i}$. Now let $\beta \in \mathbb{F}Ge_{i}$ so that $\beta = \alpha e_{i}$ for some $\alpha \in \mathbb{F}G$. Then $d(\beta) = d(\alpha e_{i}) = d(\alpha)e_{i} + \alpha d(e_{i})$. Since $\alpha, d(\alpha) \in \mathbb{F}G$, $e_{i}, d(e_{i}) \in \mathbb{F}Ge_{i}$ and $\mathbb{F}Ge_{i}$ is a two-sided ideal of $\mathbb{F}G$, therefore, $d(\beta) \in \mathbb{F}Ge_{i}$. Therefore, $d(\mathbb{F}Ge_{i}) \subseteq \mathbb{F}Ge_{i}$. Therefore, all derivations of $\mathbb{F}G$ restrict to all simple components of $\mathbb{F}G$ in the Wedderburn decomposition of $\mathbb{F}G$.
\end{remark}\vspace{12pt}

\noindent \textbf{Acknowledgements}\vspace{8pt}

\noindent The authors thank the referees and the editor for their critical reviews, comments, and suggestions, which have improved the presentation and quality of the paper. The first author receives a monthly institute fellowship from her institute, the Indian Institute of Technology Delhi, for carrying out her doctoral research. The second author is the ConsenSys Blockchain chair professor. He thanks ConsenSys AG for that privilege.\vspace{10pt}

\bibliographystyle{plain}
\end{document}